\newtheorem{theorem}{Theorem}[section]
\newtheorem{corollary}[theorem]{Corollary}
\newtheorem{lemma}[theorem]{Lemma}
\newtheorem{proposition}[theorem]{Proposition}
\theoremstyle{definition}
\newtheorem{conjecture}[theorem]{Conjecture}
\newtheorem{question}[theorem]{Question}
\theoremstyle{remark}
\newtheorem{remark}[theorem]{Remark}
\numberwithin{equation}{section}
\numberwithin{figure}{section}
\numberwithin{table}{section}
\newcommand{\Ev}{\mathrm{Ev}(\lambda)}
\newcommand{\lrb}[3]{
  \put(#2,#3){
    \begin{picture}(10,10)(0,0)
      \put(0,0){\line(1,0){10}}
      \put(0,0){\line(0,1){10}}
      \put(10,0){\line(0,1){10}}
      \put(2.5,1.3){#1}
    \end{picture}
  }
}
\newcommand{\lrt}[3]{
  \put(#2,#3){
    \begin{picture}(10,10)(0,0)
      \put(0,10){\line(1,0){10}}
      \put(0,0){\line(0,1){10}}
      \put(10,0){\line(0,1){10}}
      \put(2.5,1.3){#1}
    \end{picture}
  }
}
\newcommand{\lrbt}[3]{
  \put(#2,#3){
    \begin{picture}(10,10)(0,0)
      \put(0,10){\line(1,0){10}}
      \put(10,0){\line(0,1){10}}
      \put(0,0){\line(1,0){10}}
      \put(0,0){\line(0,1){10}}
      \put(2.5,1.3){#1}
    \end{picture}
  }
}
\begin{document}

\begin{abstract}
  Amdeberhan recently proposed certain equalities between sums in the character table of symmetric groups.
  These equalities are between signed column sums in the character table, summing over the rows labeled by partitions in $\Ev$,
  where $\lambda$ is a partition of $n$ with $r$ nonzero parts and $\Ev$ is a multiset containing $2^r$ partitions of $2n$.
  While we observe that these equalities are not true in general, we prove that they do hold in interesting special cases. These lead to new equalities between sums of degrees of irreducible characters for the symmetric group and a new combinatorial interpretation for the Riordan numbers in terms of degrees of irreducible characters labeled by partitions with three parts of the same parity. This is the first, to our knowledge, theorem about degrees of symmetric group characters with parity conditions imposed on the partitions indexing the characters.
\end{abstract}

\title{New Identities in the Character Table of Symmetric Groups involving Riordan Numbers}

\author{David J. Hemmer}
\address{Department of Mathematical Sciences\\
  Michigan Technological University\\
  Houghton, MI 49931}
\email{djhemmer@mtu.edu}

\author{Armin Straub}
\address{Department of Mathematics and Statistics\\
  University of South Alabama\\
  Mobile, AL 36688}
\email{straub@southalabama.edu}

\author{Karlee J. Westrem}
\address{Department of Mathematical Sciences\\
  Appalachian State University\\
  Boone, NC 28608}
\email{westremk@appstate.edu}

\maketitle

\section{Introduction}
\label{section: Introduction}

Let $\Sigma_n$ denote the symmetric group on $n$ letters. Recall that both the complex irreducible characters and the conjugacy classes of $\Sigma_n$ are indexed by partitions of $n$. For two partitions $\lambda$ and $\mu$ of $n$, we let $\chi^\mu_\lambda$ be the value of the irreducible character $\chi^\mu$ on a permutation of cycle type $\lambda$. The degree $\chi^\mu_{(1^n)} = \chi^\mu(1)$ is the number of standard Young tableaux of shape $\mu$, which we denote by $f^\mu$. A good reference for the complex representation theory of $\Sigma_n$ is \cite{Sagan}.

We denote the size of a partition $\lambda$ and its length by $|\lambda|$ and $\ell(\lambda)$, respectively. Let $\lambda'$ denote the conjugate partition obtained by reflecting the Young diagram of $\lambda$ across the  main diagonal.

For the partition $\tau = (1^n)$, we have that $\chi^\tau$ is the linear character corresponding to the signature representation, denoted $\operatorname{sgn}$. Recall that
\begin{equation}
  \chi^\mu \otimes \operatorname{sgn} = \chi^{\mu'}.
  \label{eq: tensorwithsign}
\end{equation}

Let $\lambda = (\lambda_1, \lambda_2, \dots, \lambda_r)$ be a partition of $n$ with $r$ nonzero parts, so $\ell(\lambda) = r$. In \cite{AmdeberhanOverflow}, Amdeberhan defined $\Ev$ to be the set of all partitions of $2n$ obtained by replacing each $\lambda_i$ with either $2\lambda_i$ or two copies of $\lambda_i$, and then reordering the parts to be nonincreasing. Thus $\Ev$ is a multiset containing $2^{\ell(\lambda)}$ partitions of $2n$. For example, let $\lambda = (3,2,2)$. Then:
\begin{equation}
  \Ev = \{(6,4^2), (6,4,2^2), (6,4,2^2), (6,2^4), (4^2,3^2), (4,3^2,2^2), (4,3^2,2^2), (3^2,2^4)\}.
  \label{eq: Example Ev}
\end{equation}

Observe that products of binomial coefficients may arise in the multiplicities of $\Ev$. For example, for $\lambda = (3^4,2^3)$, we will get $\binom{4}{2}\binom{3}{1}$ copies of $(6^2,3^4,4,2^4)$ in $\Ev$.

Amdeberhan defines two subsets of partitions of size $2n$ by requiring either the rows or the columns be of even length and by restricting the number of parts:
\begin{equation}
  \begin{aligned}
    \mathcal{R}_N(2n) &:= \{ \mu \vdash 2n \mid \ell(\mu) \leq N, \mu_i \text{ is even for all } i \}, \\
    \mathcal{R}_N^c(2n) &:= \{ \mu \vdash 2n \mid \ell(\mu) \leq N, \mu_i' \text{ is even for all } i \}.
  \end{aligned}
  \label{eq: DefineRNRNc}
\end{equation}

For example,
\begin{equation}
  \begin{aligned}
    \mathcal{R}_3(10) &:= \{ (10), (8,2), (6,4), (6,2,2), (4,4,2) \}, \\
    \mathcal{R}_4^c(10) &:= \{ (5,5), (4,4,1,1), (3,3,2,2) \}.
  \end{aligned}
  \label{eq: exampleRNRNc}
\end{equation}

\begin{remark}
  \label{remark: RN}
  Notice that $\mathcal{R}_2^c(2n)$ is the single partition $(n,n)$. Also notice that once $N \geq n$, the set $\mathcal{R}_N(2n)$ stabilizes, and once $N \geq 2n$, the set $\mathcal{R}_N^c(2n)$ stabilizes. Thus, for $N \geq 2n$, the partitions in $\mathcal{R}_N(2n)$ are precisely the conjugates of the partitions in $\mathcal{R}_N^c(2n)$.
\end{remark}

\subsection{Motzkin and Riordan Numbers}
The \emph{Motzkin numbers} $M(n)$ are the sequence A001006 in \cite{oeis}. $M(n)$ counts the number of \emph{Motzkin paths}, which are paths from $(0,0)$ to $(n,0)$ using only steps $U = (1,1)$, $F = (1,0)$, and $D = (1,-1)$, and not going below the $x$-axis. $M(n)$ is also well known to count the number of standard Young tableaux of size $n$ with three or fewer rows; for an example of a bijection, see \cite{MatsakisMotzkinInspired}. These are also easily seen to be in bijection with three-candidate ballot sequences of length $n$. A three-candidate ballot sequence of length $n$ is a string $(b_1, b_2, \dots, b_n)$ with $b_i \in \{A,B,C\}$ such that each prefix $b_1, b_2, \dots, b_k$, with $1 \leq k \leq n$, has the property that the number of $A$'s is no less than the number of $B$'s, which in turn is no less than the number of $C$'s. Here, the three candidates are represented by $A$, $B$, and $C$, and each $b_j$ represents a vote for one of the candidates. 

A \emph{Riordan path} is a Motzkin path with the additional requirement that there may not be a flat step $F$ on the $x$-axis, i.e., an $F$ may only appear if there are more $U$s than $D$'s prior to it in the sequence. We let $R(n)$ be the number of Riordan paths of length $n$; this is the sequence A005043 in \cite{oeis}. It is well-known that:
\begin{equation}
  M(n) = R(n) + R(n+1).
  \label{eq: M=R+R}
\end{equation}

The number $R(n)$ also has an interpretation in terms of degrees of irreducible symmetric group characters. This is a comment of Regev given on the OEIS entry without proof, so we present a proof of a refinement here:

\begin{proposition}
  \label{prop:RiordannumberSYT}
  Let $0 \leq m < n$. The number of Riordan paths of length $n$ with $m$ flat steps and $k$ up steps (and thus $k$ down steps) is $f^{(k,k,1^m)}$.
\end{proposition}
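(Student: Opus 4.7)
The plan is to construct an explicit bijection between Riordan paths of length $n$ with $k$ up steps and $m$ flat steps and standard Young tableaux of shape $(k,k,1^m)$. I would carry this out in two stages.

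First, I would identify SYT of shape $(k,k,1^m)$ with a restricted class of Motzkin words. Given such a tableau $T$, record for each $i\in\{1,\dots,n\}$ whether $i$ lies in row $1$ (letter $U$), row $2$ (letter $D$), or a row below (letter $F$); this produces a word in $\{U,D,F\}^n$ with $k$ $U$'s, $k$ $D$'s, and $m$ $F$'s. The standardness of $T$ translates exactly into (i) the word is a Motzkin path (at every prefix the number of $U$'s is at least the number of $D$'s), coming from column-strictness in the $2\times k$ rectangular block, and (ii) the first $F$ (if any) occurs after the first $D$, coming from the strictness $T(2,1)<T(3,1)$. Thus SYT of shape $(k,k,1^m)$ are in bijection with Motzkin paths with $k$ up steps and $m$ flat steps satisfying the extra condition (ii).

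Second, I would construct a bijection $\phi$ between Riordan paths and these restricted Motzkin paths. Given a Riordan path $P$, process its $F$'s in order of decreasing original position: at each stage, take the current $F$ at position $p$ and swap its label with the $D$ at the smallest position $q>p$ in the current word. I would verify that (a) such a $D$ always exists, since at the moment of its turn the label at $p$ is still $F$ at positive height, so the path must return to $0$ later via a $D$; (b) each swap preserves non-negativity, because in the current word there is no $D$ strictly between $p$ and $q$, so the heights in $[p,q-1]$ are all at least $1$, and the swap decreases each by exactly $1$; and (c) the resulting Motzkin word satisfies (ii). For (c), I would use that swaps only permute the $D$ and $F$ labels among the $k+m$ original $D$-and-$F$ positions, and argue that the smallest such position must be labeled $D$ in $\phi(P)$: either it was originally a $D$ and no $F$ to its left could have chosen it, or it was originally an $F$, in which case it is processed last and its swap deposits a $D$ there. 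The inverse map $\phi^{-1}$ processes $F$'s left to right, swapping each with the nearest $D$ on its left.

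The main obstacle will be formalizing the invariants of the swapping procedure through its iterations, in particular verifying (b) after multiple previous swaps have already modified the word, and showing that $\phi$ and $\phi^{-1}$ are mutually inverse. I would handle this by noting that $U$-positions are never touched, so all the action happens on the $k+m$ original $D$-or-$F$ positions, reducing the problem to a careful bookkeeping of the sequence of swaps and their effect on intermediate heights; a key lemma is that when processing $F$ at original position $p$, no earlier swap affects the height at $p$ (since all earlier swaps involved positions strictly greater than $p$). Composing $\phi$ with the word-to-SYT bijection from the first step then yields the desired bijection between Riordan paths and SYTs of shape $(k,k,1^m)$, proving the proposition.
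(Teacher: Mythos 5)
Your proposal is correct, and the composite map you describe appears to compute the very same bijection as the paper's, but you reach it by a genuinely different route. The paper constructs the Riordan path from the tableau in a single step: the first-row entries give the positions of the $U$'s, the forced entry $T(2,1)$ supplies a $D$ at the final position, and the remaining $k+m-1$ letters ($F$ for first-column cells, $D$ for second-row cells, read in the order of their entries) are dropped into the remaining positions \emph{in relative order}; the verification that the output is Riordan and that the map inverts is left to the reader. You instead factor the bijection through an intermediate family --- Motzkin paths with $k$ up steps and $m$ flat steps whose first flat step follows the first down step --- obtained from the tableau by the standard positional encoding, and then repair the flat-steps-at-height-zero defect by a rightward $F$--$D$ swapping algorithm. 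What your version buys is a clean intermediate lemma (the characterization of the positional words of SYT of shape $(k,k,1^m)$) plus explicit, checkable invariants for correctness: your points (a), (b), (c) are all argued soundly, the crucial observation being that every earlier swap involves only positions strictly to the right of the $F$ currently being processed, so its height and the existence of a later $D$ are inherited from the original path. The price is the bookkeeping you already flag: one still has to check that $\phi^{-1}$ is well defined on \emph{every} restricted Motzkin word (the first $F$ has a $D$ to its left by condition (ii), and each subsequent $F$ sees the $D$ deposited by the previous swap), that its output is Riordan (each swap raises heights on the affected interval, and the relocated $F$ lands at positive height), and that the two procedures undo one another (the $D$ deposited at position $p$ is the nearest $D$ to the left of the relocated $F$ at position $q$, since no $D$ lay strictly between them). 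None of these steps presents a difficulty, but they should be written out --- the paper's one-line ``straightforward to verify'' is doing comparable work.
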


\begin{proof}
  Given a standard tableau of shape $(k,k,1^m)$, we describe how to construct a corresponding Riordan path. The numbers in the first row correspond to the positions of the $U$'s. With the first row determined, the first entry in row 2 is forced; we can think of it as corresponding to the $D$ at the end of the sequence. Now there are $k+m-1$ numbers left in the tableau and $k+m-1$ empty positions in the sequence. List the remaining entries of the tableau in order, marking the $m$ entries from the first column as $F$ and the remaining $k-1$ entries from the second row as $D$. Then simply fill in the empty positions with these entries in the same relative order. For example, let $k=m=3$ and
  \begin{equation}
    T = \begin{ytableau}
      1 & 2 & 7 \\
      3 & 5 & 8 \\
      4 \\
      6 \\
      9
    \end{ytableau}.
    \label{eq: Tableau example}
  \end{equation}

  Filling in $U$s in positions $1,2,7$ and $D$ in the last position, we obtain $UU????U?D$. The remaining entries are $\{{\color{red} 4}, 5, {\color{red} 6}, 8, {\color{red} 9}\}$, where red denotes the entries in the final $m$ rows. Thus, we get a corresponding sequence $FDFDF$, which we use to replace the question marks, arriving at $UUFDFDUFD$. It is straightforward to verify the resulting sequence is Riordan and to invert the map.
\end{proof}

Summing over all $m$, an immediate corollary is the observation of Regev:
\begin{corollary}
  \label{cor: Riordan sum}
  The number of Riordan paths of length $n$ is:
  \begin{equation}
    R(n) = \sum_{k=1}^{\lfloor n/2 \rfloor} f^{(k,k,1^{n-2k})}.
    \label{eq: Riordan sum}
  \end{equation}
\end{corollary}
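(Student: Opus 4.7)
The plan is to obtain Corollary \ref{cor: Riordan sum} as a direct summation of the refined count established in Proposition \ref{prop:RiordannumberSYT}, so no new combinatorial argument is required; the work is just bookkeeping on the parameters.

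First, I would classify Riordan paths of length $n$ according to their step statistics. Any such path decomposes as $k$ up steps, $k$ down steps, and $m$ flat steps for some $k,m \ge 0$ with $2k + m = n$. Since a Riordan path of positive length cannot be all flat (flat steps on the $x$-axis are forbidden), we must have $k \ge 1$, which forces $k \in \{1, 2, \dots, \lfloor n/2 \rfloor\}$ and determines $m = n - 2k$ uniquely from $k$.

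Next, I would invoke Proposition \ref{prop:RiordannumberSYT} for each admissible $k$: the number of Riordan paths of length $n$ with $k$ up steps and $m = n - 2k$ flat steps equals $f^{(k,k,1^{n-2k})}$. A brief sanity check that the hypothesis $0 \le m < n$ of the proposition is met in every term of the sum: for $k \ge 1$ we have $m = n - 2k \le n - 2 < n$, and $m \ge 0$ is exactly the upper bound $k \le \lfloor n/2 \rfloor$. Summing the bijective count over $k$ then yields
\begin{equation*}
  R(n) = \sum_{k=1}^{\lfloor n/2 \rfloor} f^{(k,k,1^{n-2k})},
\end{equation*}
which is the desired identity. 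There is no real obstacle here; the only subtlety is ensuring the indexing range matches the statement of the proposition, which the argument above confirms.
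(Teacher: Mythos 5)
Your proposal is correct and is essentially the paper's own argument: the paper obtains the corollary by summing the refined count of Proposition~\ref{prop:RiordannumberSYT} over all values of the flat-step parameter, exactly as you do. Your additional verification that $k\ge 1$ is forced (no all-flat Riordan path) and that the hypothesis $0\le m<n$ holds for every term is a reasonable bit of bookkeeping that the paper leaves implicit.
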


\section{Amdeberhan's Question}
\label{sec:amdeberhan:question}

For a partition $\lambda$ with part frequencies $(1^{m_1}, 2^{m_2}, \dots, n^{m_n})$, we denote with
$$z_\lambda = \prod_{j=1}^n j^{m_j} m_j!$$
the size of the centralizer of a permutation with cycle type $\lambda$.
In a 2023 post on MathOverflow \cite{AmdeberhanOverflow}, Amdeberhan raised the following intriguing question about certain weighted sums of irreducible characters for symmetric groups, with columns corresponding to $\mathcal{R}_N(2m)$ or $\mathcal{R}_N^c(2m)$ and rows corresponding to $\Ev$.

\begin{question}[\cite{AmdeberhanOverflow}]
  \label{q:amdeberhan}
  Is it true that
  \begin{equation}
    \sum_{\lambda \vdash n} \frac{1}{z_\lambda} \sum_{\tilde{\lambda} \in \Ev} \sum_{\mu \in \mathcal{R}_{2N+1}(2n)} (-1)^{\ell(\tilde{\lambda})}  \chi^\mu_{\tilde{\lambda}} =
    \sum_{\lambda \vdash n} \frac{1}{z_\lambda} \sum_{\tilde{\lambda} \in \Ev} \sum_{\mu \in \mathcal{R}_{2N}^c(2n)} \chi^\mu_{\tilde{\lambda}}
    \label{eq: q:amdeberhan}
  \end{equation}
  for given integers $n, N \geq 1$?
\end{question}

In \cite{AmdeberhanOverflow}, it is noted that the answer to that question is affirmative for sufficiently large $N$.  More precisely, we observe that \eqref{eq: q:amdeberhan} is true if $N \ge n$. This follows from Remark~\ref{remark: RN}, by which the sets $\mathcal{R}_{2N+1}(2n)$ and $\mathcal{R}^c_{2N}(2n)$ contain exactly conjugate partitions if $N \ge n$, combined with the following fact: suppose $\sigma \in \Sigma_{2n}$ has cycle type $\tilde{\lambda}$. Since $2n$ is even, $(-1)^{\ell(\tilde{\lambda})} = 1$ precisely when $\sigma$ is an even permutation. Thus, by \eqref{eq: tensorwithsign}, for any $\mu \vdash 2n$, we have:
\begin{equation}
  (-1)^{\ell(\tilde{\lambda})} \chi^\mu_{\tilde{\lambda}} = \chi^{\mu'}_{\tilde{\lambda}}.
  \label{eq:removealternatingsign}
\end{equation}

In \cite{AmdeberhanOverflow}, it was asked whether \eqref{eq: q:amdeberhan} is true for all $n, N \geq 1$.  By explicitly computing both sides of \eqref{eq: q:amdeberhan} for all $N < n$ and small fixed $n$, we find that \eqref{eq: q:amdeberhan} is true for all $n \le 11$. However, the identity does not continue to hold for $n=12$. In the case $n = 12$, \eqref{eq: q:amdeberhan} holds for $N = 1$ and $N = 2$ but then for fails for $N = 3$, where the left-hand side evaluates to $1040$ and the right-hand side to $1041$.

On the other hand, our computations of initial cases show that \eqref{eq: q:amdeberhan} is true for $N=1$ and $N=2$ in all cases $n \le 15$. In those cases, the inner double-sums in identity \eqref{eq: q:amdeberhan} match for all partitions $\lambda$. We therefore also consider the following stronger version of Amdeberhan's question.

\begin{question}
  \label{q:amdeberhan:x}
  For which partitions $\lambda \vdash n$ and which integers $N \geq 1$ does the following identity hold?
  \begin{equation}
    \sum_{\tilde{\lambda} \in \Ev} \sum_{\mu \in \mathcal{R}_{2N+1}(2n)} (-1)^{\ell(\tilde{\lambda})} \chi^\mu_{\tilde{\lambda}} =
    \sum_{\tilde{\lambda} \in \Ev} \sum_{\mu \in \mathcal{R}_{2N}^c(2n)} \chi^\mu_{\tilde{\lambda}}
    \label{eq: stronger form}
  \end{equation}
\end{question}

As observed above for \eqref{eq: q:amdeberhan}, this identity holds for all partitions $\lambda \vdash n$ in the case that $N \ge n$.
Computing all instances, we again find that the identity \eqref{eq: stronger form} holds for all partitions of size $n \le 7$ and all values of $N$.
On the other hand, \eqref{eq: stronger form} does not hold for certain partitions of size $n=8$ if $N=3$. We provide some more details on this case in Appendix~\ref{sec:appendix-counterexamples}.
However, we conjecture that \eqref{eq: stronger form} is true for all partitions $\lambda$ if $N=1$.
When $N=1$, the set $\mathcal{R}_{2N}^c(2n)$ is the single partition $(n,n)$, and the set $\mathcal{R}_{2N+1}(2n)$ consists of all partitions of $2n$ with at most three parts, all even.
\begin{conjecture}[$N=1$ version]
  \label{conj:N=1 version}
  For any partition $\lambda \vdash n$, we have
  \begin{equation}
    \sum_{\tilde{\lambda} \in \Ev} \sum_{\mu \in \mathcal{R}_3(2n)} (-1)^{\ell(\tilde{\lambda})} \chi^\mu_{\tilde{\lambda}} =
    \sum_{\tilde{\lambda} \in \Ev} \chi^{(n,n)}_{\tilde{\lambda}}.
    \label{eq: N=1form}
  \end{equation}
\end{conjecture}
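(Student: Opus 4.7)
The plan is to convert the conjectured identity into a symmetric function identity via the Frobenius characteristic, and then reinterpret it representation-theoretically using the hyperoctahedral group $B_n := \Sigma_2 \wr \Sigma_n$ embedded in $\Sigma_{2n}$. Using \eqref{eq:removealternatingsign}, the left-hand side of \eqref{eq: N=1form} becomes $\sum_{\tilde\lambda \in \Ev} \sum_{\mu \in \mathcal{R}_3(2n)} \chi^{\mu'}_{\tilde\lambda}$. In the ring $\Lambda$ of symmetric functions one has
\[
  P_\lambda \;:=\; \sum_{\tilde\lambda \in \Ev} p_{\tilde\lambda} \;=\; \prod_{i=1}^{\ell(\lambda)} \bigl( p_{2\lambda_i} + p_{\lambda_i}^2 \bigr) \;=\; 2^{\ell(\lambda)}\, p_\lambda[h_2],
\]
the last equality using $p_m[h_2] = (p_m^2 + p_{2m})/2$ and the multiplicativity of plethysm in its first argument. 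Via the Frobenius formula $\sum_{\tilde\lambda \in \Ev} \chi^\nu_{\tilde\lambda} = \langle s_\nu, P_\lambda \rangle$, the conjecture \eqref{eq: N=1form} becomes the symmetric-function identity
\[
  \Big\langle s_{(n,n)} - \sum_{\mu \in \mathcal{R}_3(2n)} s_{\mu'},\; p_\lambda[h_2] \Big\rangle \;=\; 0 \qquad \text{for every } \lambda \vdash n.
\]

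Since $\{p_\lambda\}_{\lambda \vdash n}$ spans $\Lambda_n$, this is in turn equivalent to Hall-orthogonality of the bracketed virtual symmetric function with $s_\alpha[h_2]$ for every $\alpha \vdash n$. Classical wreath-product Frobenius theory identifies $s_\alpha[h_2]$ as the Frobenius characteristic of $\mathrm{Ind}_{B_n}^{\Sigma_{2n}} \chi^{(\alpha, \emptyset)}$, where $\chi^{(\alpha, \emptyset)}$ denotes the pullback of $\chi^\alpha$ through the canonical surjection $B_n \twoheadrightarrow \Sigma_n$. By Frobenius reciprocity, the conjecture thus reduces to showing that, for every $\alpha \vdash n$, the multiplicity of $\chi^{(\alpha, \emptyset)}$ in $\chi^{(n,n)}|_{B_n}$ equals the sum, over $\mu \in \mathcal{R}_3(2n)$, of the multiplicity of $\chi^{(\alpha, \emptyset)}$ in $\chi^{\mu'}|_{B_n}$. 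Writing elements of $B_n$ as pairs $(\vec\epsilon, \sigma)$ with $\vec\epsilon \in \{\pm 1\}^n$ and $\sigma \in \Sigma_n$, standard Clifford theory for the normal abelian subgroup $\Sigma_2^n \trianglelefteq B_n$ yields $\sum_{\vec\epsilon} \chi^{(\alpha, \beta)}((\vec\epsilon, \sigma)) = 2^n\, \chi^\alpha(\sigma)\, \delta_{\beta = \emptyset}$, so the identity rephrases as
\[
  \sum_{\vec\epsilon \in \{\pm 1\}^n} \left( \sum_{\mu \in \mathcal{R}_3(2n)} \chi^{\mu'}((\vec\epsilon, \sigma)) \;-\; \chi^{(n,n)}((\vec\epsilon, \sigma)) \right) \;=\; 0 \qquad \text{for every } \sigma \in \Sigma_n.
\]

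The main obstacle lies in establishing this multiplicity identity. A natural approach proceeds via Littlewood's theorem on plethysm with $h_2$ together with the 2-quotient bijection, which express $\langle s_\nu, s_\alpha[h_2] \rangle$ as a sum of products of Kostka numbers indexed by the 2-quotient $(\nu^{(0)}, \nu^{(1)})$ of $\nu$ (only $\nu$ with empty 2-core contribute). The 2-quotient of $(n,n)$ is the convenient pair of one-row partitions $((\lceil n/2 \rceil), (\lfloor n/2 \rfloor))$, making the left-hand side very explicit, while the 2-quotients of $\mu' = (3^{2a}, 2^{2b}, 1^{2c})$ with $3a + 2b + c = n$ are more intricate and depend on the parities of $a$, $b$, $c$. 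I expect the verification to reduce either to a direct bijection on 2-quotient pairs, or, via the involution $\omega$, to a manipulation of Littlewood's three-variable generating function $\prod_{1 \leq i \leq j \leq 3}(1 - x_i x_j)^{-1}$ that encodes $\sum_{\mu \in \mathcal{R}_3(2n)} s_\mu$.
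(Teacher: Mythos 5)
Your reduction is sound as far as it goes, but it is not a proof: what you arrive at is an equivalent reformulation of the conjecture, not a resolution of it. Concretely, the identity $\sum_{\tilde\lambda\in\Ev} p_{\tilde\lambda} = 2^{\ell(\lambda)}p_\lambda[h_2]$ is correct, the passage from orthogonality against all $p_\lambda[h_2]$ to orthogonality against all $s_\alpha[h_2]$ is valid, and the wreath-product and Clifford-theory rephrasings are standard and check out. This is essentially the same reduction the paper performs: Theorem~\ref{thm: SymFunctionIdentity} gives $\sum_{\tilde\lambda}(-1)^{\ell(\tilde\lambda)}p_{\tilde\lambda}=2^r\prod_i m_{\lambda_i\lambda_i}=2^r\,p_\lambda[e_2]$, which turns \eqref{eq: N=1form} into the inner-product identity \eqref{eq: ConjN=1reducedform}; your $h_2$-version is the image of theirs under $\omega$ combined with \eqref{eq:removealternatingsign}. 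But at the decisive moment you write that you ``expect the verification to reduce'' to a bijection on $2$-quotients or to a manipulation of Littlewood's generating function. That step is the entire content of the problem, and nothing in the proposal carries it out or even verifies it for a nontrivial family of $\alpha$.

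You should also be aware that the statement you are attacking is left open in the paper: the authors prove \eqref{eq: N=1form} only for partitions with a single part size, $\lambda=(c^d)$ (via Riordan-path and ballot-sequence combinatorics when $c=1$, and via explicit constant-term evaluations when $c>1$), and they state the general case as a conjecture supported by computation up to $n\le 15$. So a correct completion of your program would be a genuinely new result, not a reproof. If you want to extract something provable from your framework, note that for $\lambda=(1^n)$ your target becomes $\bigl\langle s_{(n,n)}-\sum_{\mu\in\mathcal{R}_3(2n)}s_{\mu'},\,h_2^n\bigr\rangle=0$, an iterated dual-Pieri computation that recovers Propositions~\ref{prop:RHSN=1} and~\ref{prop: LHS} together with Theorem~\ref{thm: Riordan=ballotequalparity}; for general $\alpha$ you would need to actually evaluate $\langle s_{\mu'},s_\alpha[h_2]\rangle$ for $\mu'=(3^{2a},2^{2b},1^{2c})$ via domino tableaux and exhibit the cancellation against $\langle s_{(n,n)},s_\alpha[h_2]\rangle$, and that is where all of the difficulty lies.
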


In this paper, we will prove this conjecture in the case where $\lambda = (c^n)$. We find fundamentally different behavior for the $c=1$ case and the $c>1$ case.

\subsection{Two examples}
We will prove Conjecture~\ref{conj:N=1 version} for the case $\lambda = (c^n)$.
We illustrate here this result, which looks different for $\lambda=(1^n)$ and $\lambda=(c^n)$ for $c>1.$ Let $n=4$ and assume first that $\lambda=(1^4).$ Then the multiset $\Ev$ has $2^4=16$ elements: $$\Ev = \{(1^8),(2,1^6),(2^2,1^4),(2^3,1^2),(2^4)\}$$ with multiplicities $1,4,6,4,1$, respectively. We are assuming $N=1$ so 

$$\mathcal{R}_{2N+1}(8)=\mathcal{R}_{3}(8) =\{(8),(6,2),(4,4),(4,2,2)\}$$ and 
$$\mathcal{R}_{2N}^c(8)=\mathcal{R}_{2}^c(8)=\{(4,4)\}.$$

\begin{table}[h]

\centering

\small 
\begin{adjustbox}{max width=\textwidth} 
\begin{tabular}{c|cccccc}
\hline
$\mu \setminus \tilde{\lambda}$ & $[1^8]$ & $[2,1^6]$ & $[2,2,1^4]$ & $[2,2,2,1^2]$ & $[2,2,2,2]$ & \\
\hline
$[8]$ & 1 & 1 & 1 & 1 & 1 \\
$[6,2]$ & 20 & 10 & 4 & 2 & 4  \\
$[4,4]$ & 14 & 4 & 2 & 0 & 6  \\
$[4,2,2]$ & 56 & 4 & 0 & 4 & 8  \\
&&&&&&\\
Column Sum &91 &19 &7 &7 & 19 & \\
Weight & 1&  $-4$& 6& $-4$ & 1\\
Total&91&$-76$&42&$-28$&19\\
\hline
\end{tabular}
\end{adjustbox}
\captionsetup{aboveskip=0.5cm} 
\caption{Partial character table of $\Sigma_8$ for the case $\lambda=(1^4)$}

\label{table:S8lambda=1111}
\end{table}

Table \ref{table:S8lambda=1111} is a partial character table of $\Sigma_8$. The columns correspond to partitions $\tilde{\lambda}$ in $\Ev$ and the row ``weight'' is just $(-1)^{\ell(\tilde{\lambda})}$ times the multiplicity of $\tilde{\lambda}$. Summing the totals we obtain
$$91 \cdot 1 -19 \cdot 4 +7 \cdot 6-7 \cdot 4+19 \cdot 1=48=2^4 \cdot 3$$
as the LHS of Conjecture~\ref{conj:N=1 version}. For the RHS we look only at the row corresponding to $(4,4)$ and compute a weighted sum without the signs, obtaining
$$14\cdot 1 + 4 \cdot 4 +2 \cdot 6+ 0 \cdot 4 + 6 \cdot 1=48=2^4 \cdot 3.$$
We will prove that the $3$'s on the right-hand sides are the Riordan number $R(4)$ and also equal to the sums $f^{(1,1,1,1)}+f^{(2,2)} = f^{(4)}+f^{(2,2)}$ (see Theorem~\ref{thm:equalitySpecht}).

For an example illustrating the case $c>1$, consider $n=4$ again and now choose $\lambda=(2,2)$. Then:
$$\Ev=\{(2^4),(4,2^2),(4^2)\}$$
with multiplicities 1,2,1, respectively.

\begin{table}[h]
\centering
\small 
\begin{adjustbox}{max width=\textwidth} 
\begin{tabular}{c|ccc}
\hline
$\mu \setminus \tilde{\lambda}$ & $[2,2,2,2]$ & $[4,2,2]$ & $[4,4]$ \\
\hline
$[8]$ & 1 & 1 & 1 \\
$[6,2]$ & 4 & 2 & 0 \\
$[4,4]$ & 6 & 2 & 2 \\
$[4,2,2]$ & 8 & 0 & 0 \\
&&&\\
Column Sum&19&5&3\\
Weight&1&$-2$&1\\
\hline
\end{tabular}
\end{adjustbox}
\caption{Partial character table of $\Sigma_8$ for the case $\lambda=(2^2)$}
\label{table: lambda=22}
\end{table}
Looking at Table \ref{table: lambda=22}, our weighted sum of the column sums is $$1\cdot 19 -2 \cdot 5 +1 \cdot 3=12=2^2 \cdot 3,$$
which agrees with the unsigned weighted sum across the row $(4,4)$, which is $$1\cdot 6 +2 \cdot 2 +1 \cdot 2=12=2^2\cdot 3.$$

In this case we will prove that the $3$'s on the right-hand sides represent the central trinomial coefficient $T(2)$.

\section{Symmetric Functions}
In order to understand the alternating sum of character values over this unusual set $\Ev$, we will use symmetric functions and a recent identity proved by the third author. A good reference for symmetric functions is Chapter 7 of \cite{StanleyEC2}. We let $m_\lambda$, $p_\lambda$, and $s_\lambda$ denote the usual monomial, power sum, and Schur symmetric functions, respectively. There is a standard inner product on the space of symmetric functions of degree $n$ for which the Schur functions $\{ s_\lambda \mid \lambda \vdash n \}$ form an orthonormal basis. The character table of $\Sigma_n$ gives the change of basis matrix expressing the power sum basis in terms of the Schur basis. That is, we have:
\begin{lemma}\cite[Corollary 7.17.4]{StanleyEC2}
  \label{lem:char-inner-product}
  Let $\mu, \lambda \vdash n$. We have:
  \begin{equation}
    \chi^\mu_\lambda = \left\langle p_\lambda, s_\mu \right\rangle.
  \end{equation}
\end{lemma}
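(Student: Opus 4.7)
The plan is to deduce this identity from the Frobenius characteristic map, which is the standard bridge between the representation theory of $\Sigma_n$ and symmetric functions of degree $n$.

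First, I would introduce the characteristic map $\operatorname{ch}$, sending a class function $f$ on $\Sigma_n$ to the symmetric function
\[ \operatorname{ch}(f) = \sum_{\lambda \vdash n} \frac{f(\lambda)}{z_\lambda}\, p_\lambda. \]
Since the conjugacy class of $\Sigma_n$ corresponding to cycle type $\lambda$ has size $n!/z_\lambda$, the usual inner product $\langle f, g \rangle = \frac{1}{n!}\sum_{\sigma \in \Sigma_n} f(\sigma)\overline{g(\sigma)}$ on class functions translates, under $\operatorname{ch}$, into the Hall inner product on $\Lambda^n$, for which $\langle p_\lambda, p_\mu \rangle = z_\lambda\, \delta_{\lambda\mu}$. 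Verifying this translation is a routine computation using that class functions are constant on conjugacy classes.

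Second, I would establish the Frobenius character formula $\operatorname{ch}(\chi^\mu) = s_\mu$. This is the heart of the argument and can be proved in several ways: for instance, by constructing the Specht modules $S^\mu$ and analyzing their characters via Young symmetrizers, by inducting characters from Young subgroups and applying the Jacobi--Trudi determinantal identity, or by verifying inductively that $\{\operatorname{ch}(\chi^\mu)\}$ satisfies the Pieri rule characterizing Schur functions. Any of these routes requires the bulk of the substantive work.

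Third, the identity follows formally: expanding
\[ s_\mu \;=\; \operatorname{ch}(\chi^\mu) \;=\; \sum_{\nu \vdash n} \frac{\chi^\mu_\nu}{z_\nu}\, p_\nu \]
and pairing both sides with $p_\lambda$ under the Hall inner product yields $\langle s_\mu, p_\lambda \rangle = \frac{\chi^\mu_\lambda}{z_\lambda}\cdot z_\lambda = \chi^\mu_\lambda$, as desired. The main obstacle is step two, the Frobenius formula itself; everything else is a formal consequence of the isometry property of $\operatorname{ch}$ together with the orthogonality of the power-sum basis.
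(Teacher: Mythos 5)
Your argument is correct and is essentially the standard proof from the cited source (Stanley, EC2, around Corollary 7.17.4): the paper itself gives no proof, simply citing this classical fact, and your route via the Frobenius characteristic map, the isometry with the Hall inner product, and $\operatorname{ch}(\chi^\mu)=s_\mu$ is exactly how the reference establishes it. The only caveat is that your step two (the Frobenius formula) is sketched rather than proved, but you correctly identify it as the substantive ingredient, and everything else follows formally as you describe.
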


The following result is key to approaching Conjecture~\ref{conj:N=1 version} using symmetric functions:
\begin{theorem}\cite{westrem2024newsymmetricfunctionidentityPreprint}
  \label{thm: SymFunctionIdentity}
  If $\lambda = (\lambda_1, \lambda_2, \dots, \lambda_r) \vdash n$, then
  \begin{equation}
    \sum_{\tilde{\lambda} \in \Ev} (-1)^{\ell(\tilde{\lambda})} p_{\tilde{\lambda}} = 2^r \prod_{i=1}^r m_{\lambda_i \lambda_i}.
  \end{equation}
\end{theorem}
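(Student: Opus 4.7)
The plan is to factor the sum over $\Ev$ into a product indexed by the parts of $\lambda$, and then reduce the problem to a single one-part identity that can be verified by direct computation.

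First I would parameterize the multiset $\Ev$ by the subsets $S \subseteq [r] := \{1,2,\ldots,r\}$. For each such $S$, let $\tilde{\lambda}^S$ denote the partition obtained by replacing $\lambda_i$ with $2\lambda_i$ for $i \in S$ and with two copies of $\lambda_i$ for $i \notin S$. There are exactly $2^r$ such subsets, and while distinct subsets may produce the same underlying partition, the multiplicity with which a partition $\tilde{\lambda}$ occurs in $\Ev$ is precisely the number of subsets $S$ with $\tilde{\lambda}^S = \tilde{\lambda}$. Thus summing over $S \subseteq [r]$ is the same as summing over the multiset $\Ev$.

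Under this parameterization, the length is $\ell(\tilde{\lambda}^S) = |S| + 2(r - |S|) = 2r - |S|$, so $(-1)^{\ell(\tilde{\lambda}^S)} = (-1)^{|S|}$. Since power sums are multiplicative across parts, we also have
$$p_{\tilde{\lambda}^S} = \prod_{i \in S} p_{2\lambda_i} \cdot \prod_{i \notin S} p_{\lambda_i}^2.$$
Combining these and recognizing a distributed product, the left-hand side factors as
$$\sum_{\tilde{\lambda} \in \Ev} (-1)^{\ell(\tilde{\lambda})} p_{\tilde{\lambda}} = \sum_{S \subseteq [r]} \prod_{i \in S}(-p_{2\lambda_i}) \prod_{i \notin S} p_{\lambda_i}^2 = \prod_{i=1}^{r} \bigl( p_{\lambda_i}^2 - p_{2\lambda_i} \bigr).$$

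It then suffices to verify the single-part identity $p_k^2 - p_{2k} = 2 m_{k,k}$. This follows immediately from the definition $p_k = \sum_j x_j^k$: squaring and separating diagonal from off-diagonal monomials gives $p_k^2 = \sum_j x_j^{2k} + 2 \sum_{i<j} x_i^k x_j^k = p_{2k} + 2 m_{k,k}$. Substituting this into the product yields $\prod_{i=1}^r 2 m_{\lambda_i \lambda_i} = 2^r \prod_{i=1}^r m_{\lambda_i \lambda_i}$, which is the desired identity. There is no genuine obstacle in this argument; the only point requiring care is the bookkeeping that subsets of $[r]$ parameterize $\Ev$ as a multiset (rather than as a set), so that multiplicities are automatically absorbed by the sum over $S$ and no combinatorial factor is lost.
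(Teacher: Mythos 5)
Your proof is correct and complete. Note that the paper does not actually prove this statement---it is quoted from the cited preprint of the third author---so there is no internal proof to compare against; but your argument is the natural one and every step checks out: the parameterization of the multiset $\mathrm{Ev}(\lambda)$ by subsets $S\subseteq[r]$ correctly absorbs multiplicities, the sign computation $(-1)^{\ell(\tilde{\lambda}^S)}=(-1)^{2r-|S|}=(-1)^{|S|}$ is right, the multiplicativity of $p$ over parts gives the factorization $\prod_{i=1}^r\bigl(p_{\lambda_i}^2-p_{2\lambda_i}\bigr)$, and the one-part identity $p_k^2-p_{2k}=2m_{(k,k)}$ follows by separating diagonal from off-diagonal terms in $p_k^2$. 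This yields a self-contained proof of the cited result.
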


Applying this result to the left-hand side of \eqref{eq: N=1form}, we obtain:
\begin{align}
  \sum_{\tilde{\lambda} \in \Ev} \sum_{\mu \in \mathcal{R}_3(2n)} (-1)^{\ell(\tilde{\lambda})} \chi_{\tilde{\lambda}}^\mu
  &= \left\langle \sum_{\tilde{\lambda} \in \Ev} (-1)^{\ell(\tilde{\lambda})} p_{\tilde{\lambda}}, \sum_{\mu \in \mathcal{R}_3(2n)} s_\mu \right\rangle \notag \\
  &= \left\langle 2^r \prod_{i=1}^r m_{\lambda_i \lambda_i}, \sum_{\mu \in \mathcal{R}_3(2n)} s_\mu \right\rangle.
  \label{eq: LHSN=1}
\end{align}

On the right-hand side of \eqref{eq: N=1form}, we obtain:
\begin{align}
  \sum_{\tilde{\lambda} \in \Ev} \chi^{(n,n)}_{\tilde{\lambda}}
  &= \left\langle \sum_{\tilde{\lambda} \in \Ev} p_{\tilde{\lambda}}, s_{(n,n)} \right\rangle \notag \\
  &= \left\langle \sum_{\tilde{\lambda} \in \Ev} (-1)^{\ell(\tilde{\lambda})} p_{\tilde{\lambda}}, s_{(2^n)} \right\rangle \quad\text{ (by \eqref{eq:removealternatingsign})} \notag \\
  &= \left\langle 2^r \prod_{i=1}^r m_{\lambda_i \lambda_i}, s_{(2^n)} \right\rangle.
  \label{eq: RHSN=1}
\end{align}

Cancelling the $2^r$ from equations \eqref{eq: LHSN=1} and \eqref{eq: RHSN=1}, we find that Conjecture~\ref{conj:N=1 version} is equivalent to
\begin{equation}
  \left\langle \prod_{i=1}^r m_{\lambda_i \lambda_i}, \sum_{\mu \in \mathcal{R}_3(2n)} s_\mu \right\rangle = \left\langle \prod_{i=1}^r m_{\lambda_i \lambda_i}, s_{(2^n)} \right\rangle.
  \label{eq: ConjN=1reducedform}
\end{equation}

\section{Proof of Conjecture~\ref{conj:N=1 version} for \texorpdfstring{$\lambda = (1^n)$}{lambda = (1,1,\ldots)}}

In this section, we obtain combinatorial interpretations of both sides of \eqref{eq: ConjN=1reducedform} in the case where $\lambda = (1^n)$.  Both sides are counted by the \emph{Riordan numbers}, sequence A005043 in \cite{oeis}. As a consequence of the equality, we obtain a striking new interpretation of these numbers.

\begin{proposition}
  \label{prop:RHSN=1}
  The number $\left\langle m_{(1,1)}^n, s_{(2^n)} \right\rangle$ is equal to the $n$-th Riordan number $R(n)$.
\end{proposition}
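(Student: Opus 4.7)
The plan is to reduce the inner product to a Kostka number and then exhibit a bijection between the corresponding semistandard Young tableaux and Riordan paths. Since $m_{(1,1)} = e_2$, we have $m_{(1,1)}^n = e_{(2^n)}$. Applying the standard involution $\omega$, which is an isometry with $\omega(e_\lambda) = h_\lambda$ and $\omega(s_\mu) = s_{\mu'}$, yields
\[
\left\langle m_{(1,1)}^n, s_{(2^n)} \right\rangle = \left\langle e_{(2^n)}, s_{(2^n)} \right\rangle = \left\langle h_{(2^n)}, s_{(n,n)} \right\rangle = K_{(n,n),(2^n)},
\]
the number of SSYT of shape $(n,n)$ with content $(2^n)$. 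Thus it suffices to show $K_{(n,n),(2^n)} = R(n)$.

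To this end, construct a bijection $\Phi$ between such tableaux and Riordan paths of length $n$. Given an SSYT $T$, for each $i \in \{1,\ldots,n\}$ let $a_i \in \{0,1,2\}$ be the number of $i$'s in row $1$ of $T$, and define the $i$-th step of $\Phi(T)$ to be $U$ if $a_i = 2$, $F$ if $a_i = 1$, and $D$ if $a_i = 0$. Writing $A_k = a_1 + \cdots + a_k$, $B_k = 2k - A_k$, and $p_k = A_k - k$, we have $p_0 = p_n = 0$ and the $p_k$ record the heights of $\Phi(T)$, with $p_k - p_{k-1} \in \{-1, 0, 1\}$.

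The heart of the argument is verifying that column-strictness of $T$ is equivalent to $\Phi(T)$ being a Riordan path. A direct unwinding shows that $T(1,j) < T(2,j)$ for all $j$ is equivalent to $B_k \le A_{k-1}$ for all $k \geq 1$, which in turn is equivalent to $p_{k-1} + p_k \geq 1$. Splitting according to the three values of $a_k$, one obtains: $p_{k-1} \geq 0$ when step $k$ is $U$, and $p_{k-1} \geq 1$ when step $k$ is $F$ or $D$. These inequalities say exactly that the path stays weakly above the $x$-axis and never takes an $F$ step on the $x$-axis; combined with $p_0 = p_n = 0$, this is precisely the definition of a Riordan path. The reverse construction (filling row 1 with $a_i$ copies of $i$ and row 2 with $2-a_i$ copies) yields a valid SSYT by the same chain of equivalences.

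The main obstacle is the verification in the previous paragraph: once the height formula $p_k = A_k - k$ is in hand, it amounts to a careful but short calculation, but one has to handle the distinction between the $U$ case (where $p_{k-1}$ is unconstrained beyond non-negativity) and the $F$, $D$ cases (where $p_{k-1} \geq 1$ is required) to match the Riordan axioms precisely.
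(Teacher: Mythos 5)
Your proof is correct, but it reaches the Riordan numbers by a different reduction than the paper. The paper keeps $m_{(1,1)}=s_{(1,1)}$ and computes the Littlewood--Richardson coefficient $c_{(1,1),\ldots,(1,1)}^{(2^n)}$ by iterating the dual Pieri rule, i.e.\ by growing a two-column standard tableau of shape $(2^n)$ through vertical strips of size $2$ and reading each growth step as $U$, $D$, or $F$. You instead write $m_{(1,1)}=e_2$ and apply $\omega$ to land on the Kostka number $K_{(n,n),(2^n)}$, then biject two-row semistandard tableaux of content $(2^n)$ with Riordan paths via the multiplicities $a_i$ of $i$ in the first row. The two are conjugate-dual pictures of the same count (the growth history of the paper's two-column tableau is exactly the sequence of your $a_i$'s after transposing), so neither is more general, but your route buys something concrete: the column-strictness condition becomes the explicit inequality $B_k\le A_{k-1}$, equivalently $p_{k-1}+p_k\ge 1$, and the case split on $a_k\in\{0,1,2\}$ gives precisely ``height $\ge 0$ before $U$, height $\ge 1$ before $F$ or $D$,'' which together with $p_0=p_n=0$ is a complete, checkable characterization of Riordan paths. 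The paper's argument leaves the matching of the growth constraints with the Riordan axioms at the level of a plausible description, so your version is the more rigorously verified of the two; the only point you pass over quickly is that your three step-conditions plus $p_0=0$ imply nonnegativity of all $p_k$ by induction (and conversely), but that is immediate.
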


\begin{proof}
  The monomial symmetric function $m_{(1,1)}$ is the same as the Schur function $s_{(1,1)}$. So the inner product in question is the Littlewood--Richardson coefficient $c_{(1,1),(1,1),\ldots,(1,1)}^{(2^n)}$. This coefficient can be determined by a simple application of the Littlewood--Richardson rule: we are counting standard tableaux of shape $(2^n)$, with entries $\{1, 2, 3, \dots, 2n\}$, such that at each step, when we add $2k+1, 2k+2$ to the existing tableau consisting of $\{1, 2, \dots, 2k\}$, the entries $\{2k+1, 2k+2\}$ are in different rows. This is the dual version of Pieri's rule; see, for instance, \cite[Sec.~7.15]{StanleyEC2}.

  Alternatively, we can think of starting from the empty tableau. At each step, we can add a domino to column one, a domino to column two if it is at least 2 boxes shorter than column one, or a single box at the end of each column if the columns are not the same length. Assigning a domino in column one to $U$, column two to $D$, and a box in both columns to $F$, we see that these tableaux are in bijection with Riordan paths.
  \end{proof}

  For example, when $n=5$, we get the six tableaux shown in Figure \ref{figure:n=5LRtableau}.
  \begin{figure}[ht]
    \centering
    \setlength{\unitlength}{0.0225in}
    \begin{picture}(160,60)(0,0)
      \lrt{1}{0}{50}
      \lrb{2}{0}{40}
      \lrt{3}{0}{30}
      \lrb{4}{0}{20}
      \lrbt{6}{0}{10}
      \lrbt{5}{10}{50}
      \lrt{7}{10}{40}
      \lrb{8}{10}{30}
      \lrt{9}{10}{20}
      \lrb{10}{10}{10}

      \put(10,0){$t_1$}
      \put(40,0){$t_2$}
      \put(70,0){$t_3$}
      \put(100,0){$t_4$}
      \put(130,0){$t_5$}
      \put(160,0){$t_6$}

      \lrt{1}{30}{50}
      \lrb{2}{30}{40}
      \lrt{5}{30}{30}
      \lrb{6}{30}{20}
      \lrbt{8}{30}{10}
      \lrt{3}{40}{50}
      \lrb{4}{40}{40}
      \lrbt{7}{40}{30}
      \lrt{9}{40}{20}
      \lrb{10}{40}{10}

      \lrt{1}{60}{50}
      \lrb{2}{60}{40}
      \lrbt{4}{60}{30}
      \lrbt{6}{60}{20}
      \lrbt{8}{60}{10}
      \lrbt{3}{70}{50}
      \lrbt{5}{70}{40}
      \lrbt{7}{70}{30}
      \lrt{9}{70}{20}
      \lrb{10}{70}{10}

      \lrt{1}{90}{50}
      \lrb{2}{90}{40}
      \lrbt{4}{90}{30}
      \lrt{7}{90}{20}
      \lrb{8}{90}{10}
      \lrbt{3}{100}{50}
      \lrt{5}{100}{40}
      \lrb{6}{100}{30}
      \lrt{9}{100}{20}
      \lrb{10}{100}{10}

      \lrt{1}{120}{50}
      \lrb{2}{120}{40}
      \lrt{3}{120}{30}
      \lrb{4}{120}{20}
      \lrbt{8}{120}{10}
      \lrt{5}{130}{50}
      \lrb{6}{130}{40}
      \lrbt{7}{130}{30}
      \lrt{9}{130}{20}
      \lrb{10}{130}{10}

      \lrt{1}{150}{50}
      \lrb{2}{150}{40}
      \lrbt{4}{150}{30}
      \lrt{5}{150}{20}
      \lrb{6}{150}{10}
      \lrbt{3}{160}{50}
      \lrt{7}{160}{40}
      \lrb{8}{160}{30}
      \lrt{9}{160}{20}
      \lrb{10}{160}{10}
    \end{picture}
    \caption{Tableaux for $n=5$ in the proof of Proposition \ref{prop:RHSN=1}}
    \label{figure:n=5LRtableau}
  \end{figure}

  The six tableaux $t_1,\ldots,t_6$ in Figure \ref{figure:n=5LRtableau} correspond to the sequences $s_1,\ldots,s_6$ below:
  \begin{equation}
    \begin{array}{lll}
      s_1 = (U, U, F, D, D) & s_2 = (U, D, U, F, D) & s_3 = (U, F, F, F, D) \\
      s_4 = (U, F, D, U, D) & s_5 = (U, U, D, F, D) & s_6 = (U, F, U, D, D)
    \end{array}
    \label{eq: Riordan sequences}
  \end{equation}

Now we turn to the left-hand side of \eqref{eq: ConjN=1reducedform}. We have:
\begin{proposition}
  \label{prop: LHS}
  The number
  \begin{equation}
    \left\langle m_{(1,1)}^n, \sum_{\mu \in \mathcal{R}_3(2n)} s_\mu \right\rangle
  \end{equation}
  is given by the number of ballot sequences of length $n$ made up of $A$s, $B$s, and $C$s such that the parity of the number of $A$s, $B$s, and $C$s is either all even (if $n$ is even) or all odd (if $n$ is odd).
\end{proposition}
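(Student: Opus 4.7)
The plan is to use the identification $m_{(1,1)} = s_{(1,1)}$, so that the inner product equals $\sum_{\mu \in \mathcal{R}_3(2n)} \langle s_{(1,1)}^n, s_\mu \rangle$, and then unfold it via the iterated (dual) Pieri rule exactly as in the proof of Proposition~\ref{prop:RHSN=1}. Iterating $s_{(1,1)} \cdot s_\nu = \sum s_\kappa$ over vertical strips $\kappa/\nu$ of size $2$, the right-hand side counts chains $\emptyset = \mu^{(0)} \subset \mu^{(1)} \subset \cdots \subset \mu^{(n)} = \mu$ in which each skew shape $\mu^{(k)}/\mu^{(k-1)}$ consists of two boxes in distinct rows. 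Summing over all $\mu \in \mathcal{R}_3(2n)$ then counts such chains whose terminal shape has at most three parts, each even.

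Since $\mu^{(n)}$ has at most three parts, every intermediate shape does as well, and each step chooses a two-element subset of $\{1,2,3\}$ telling which rows receive a box. I would encode the step by a letter in $\{A, B, C\}$: write $A$ if the boxes go in rows $1$ and $2$, $B$ if in rows $1$ and $3$, and $C$ if in rows $2$ and $3$. If $(\alpha_k, \beta_k, \gamma_k)$ records the numbers of $A$'s, $B$'s, and $C$'s among $b_1, \ldots, b_k$, then the rows of $\mu^{(k)}$ have lengths $\alpha_k + \beta_k$, $\alpha_k + \gamma_k$, and $\beta_k + \gamma_k$. Requiring these to weakly decrease is equivalent to $\alpha_k \geq \beta_k \geq \gamma_k$, which is precisely the three-candidate ballot condition described in Section~\ref{section: Introduction}.

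Finally, the condition $\mu^{(n)} \in \mathcal{R}_3(2n)$ forces each of $\alpha_n + \beta_n$, $\alpha_n + \gamma_n$, and $\beta_n + \gamma_n$ to be even, equivalently $\alpha_n \equiv \beta_n \equiv \gamma_n \pmod 2$. Since $\alpha_n + \beta_n + \gamma_n = n$, this common parity is even precisely when $n$ is even and odd precisely when $n$ is odd, matching the condition in the statement. The correspondence is manifestly invertible step by step, so the inner product equals the number of such ballot sequences. The only thing to verify carefully is that the partition condition at every stage translates to the ballot condition at every prefix; this is immediate from the three row-length formulas and is not a genuine obstacle. The real work is done by the Pieri rule and by the clean way the parity condition on the terminal row lengths converts into a parity condition on the letter counts.
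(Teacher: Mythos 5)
Your proposal is correct and follows essentially the same route as the paper's proof: identify $m_{(1,1)}$ with $s_{(1,1)}$, unfold the inner product by the dual Pieri rule into chains of partitions with at most three rows, encode each two-row step by a letter in $\{A,B,C\}$, and observe that the partition condition at each stage is the ballot condition while the evenness of the terminal row lengths is the matching-parity condition. The only addition beyond the paper's argument is your explicit remark that the common parity must agree with that of $n$, which is a correct and harmless refinement.
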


\begin{proof}
  Since $m_{(1,1)}=s_{(1,1)}$, we again need to compute Littlewood--Richardson coefficients using the dual version of Pieri's rule; however, this time the final tableau can have any shape that is a partition of $2n$ with at most three even parts. There are three possibilities at each step: we can either add a box at the end of rows one and two, rows one and three, or rows two and three. We denote these three possibilities by $A$, $B$, and $C$, respectively, so that, for instance, $A$ represents adding to rows one and two. Notice that, at all times, row one is at least as long as row two, which is at least as long as row three. This forces the corresponding sequence of $A$'s, $B$'s and $C$'s to be a ballot sequence. For example, at each step row one has $\#A + \#B$ boxes and row two has $\#A + \#C$ boxes which forces $\#B \geq \#C$ at each step. The fact that the final three-row shape is required to have all even parts forces the parities of the numbers of $A$'s, $B$'s, and $C$'s to agree.
\end{proof}

For example, when $n=6$, the ballot sequences $(A, B, A, C, B, C)$ and $(A,B,A,A,B,A)$ correspond to the following tableaux, respectively:
\begin{equation}
  \begin{ytableau}
    1 & 3 & 5 & 9 \\
    2 & 6 & 7 & 11 \\
    4 & 8 & 10 & 12
  \end{ytableau},
  \qquad
  \begin{ytableau}
    1 & 3 & 5 & 7 & 9 & 11 \\
    2 & 6 & 8 & 12 \\
    4 & 10
  \end{ytableau}.
  \label{eq: ballot tableau}
\end{equation}

The final step in the proof of Conjecture~\ref{conj:N=1 version} for $\lambda=(1^n)$ is to show these ballot sequences with equal parity are also counted by the Riordan numbers. This is indeed the case:
\begin{theorem}
  \label{thm: Riordan=ballotequalparity}
  The number of three-candidate ballot sequences of length $n$ with matching parity equals the $n$-th Riordan number $R(n)$.
\end{theorem}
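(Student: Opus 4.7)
The plan is to combine a short bijective argument with the Motzkin--Riordan relation \eqref{eq: M=R+R}. Let $a(n)$ denote the number of matched-parity three-candidate ballot sequences of length $n$, and let $b(n) = M(n) - a(n)$ be the number whose counts $(\#A, \#B, \#C)$ are not all of the same parity. First I would establish a bijection
\begin{equation*}
\Phi : \{\text{matched-parity sequences of length } n+1\} \longrightarrow \{\text{mismatched sequences of length } n\}
\end{equation*}
given by deleting the last letter; this yields $a(n+1) = b(n)$, hence $a(n) + a(n+1) = M(n)$. Since the Riordan numbers satisfy the same recursion $R(n) + R(n+1) = M(n)$ by \eqref{eq: M=R+R}, and since $a(0) = 1 = R(0)$, induction on $n$ immediately gives $a(n) = R(n)$.

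For the forward direction of $\Phi$, note that if $(b_1, \ldots, b_{n+1})$ has all three counts of the same parity as $n+1$, then removing $b_{n+1}$ flips exactly one count's parity, producing a ballot sequence of length $n$ with mismatched counts (the ballot property on the prefix is automatic). For the inverse, given a mismatched ballot sequence of length $n$ with counts $(\#A, \#B, \#C)$, observe that when $n$ is even exactly two of the three counts are odd and one is even, and when $n$ is odd exactly one count is odd. In each of the six resulting parity patterns there is a unique letter whose appending would flip the ``wrong'' count and restore matching parity.

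The main technical step is verifying that appending this uniquely determined letter preserves the ballot property, and that the resulting map is truly inverse to deletion. This is a short case check: for example, in the pattern $(\#A, \#B, \#C) \equiv (o, o, e) \pmod 2$ one must append $C$, which requires $\#B \geq \#C + 1$; the ballot inequality gives $\#B \geq \#C$, and the distinct parities of $\#B$ and $\#C$ upgrade this to a strict inequality. The remaining five patterns are handled by identical reasoning. Finally, one checks that at most one choice of appended letter can restore matched parity from a given mismatched sequence, since the parity conditions required for two different choices to both produce a matched sequence turn out to be mutually contradictory; this ensures well-definedness of the inverse and completes the bijection. I expect the case analysis, though routine, to be the step requiring the most care in writeup.
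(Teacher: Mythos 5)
Your proposal is correct and is essentially the paper's own argument: both establish the recursion $a(n)+a(n+1)=M(n)$ via the same append/delete bijection between mismatched sequences of length $n$ and matched sequences of length $n+1$ (the paper appends the unique parity-offending letter, you delete it and reconstruct the inverse), and both then conclude by comparing with \eqref{eq: M=R+R} and a base case. The only cosmetic differences are the direction in which the bijection is presented and your use of $a(0)=1=R(0)$ in place of the paper's $r(1)=0=R(1)$.
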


\begin{proof}
  Let $r(n)$ denote the number of three-candidate ballot sequences of length $n$ with matching parity. We will show that $r(n) = R(n)$.
  Let $b_1 b_2 \dots b_n$ with $b_j \in \{1, 2, 3\}$ be a three-candidate ballot sequence of length $n$ where we use $1,2,3$ instead of $A,B,C$ to represent the three candidates. Let $m_i$ be the number of $i$'s in the sequence. There are two possibilities:
  \begin{itemize}
    \item $b_1 b_2 \dots b_n$ has matching parity, that is, $m_1 \equiv m_2 \equiv m_3 \pmod{2}$.
    \item $b_1 b_2 \dots b_n$ does not have matching parity. Let $b \in \{1, 2, 3\}$ be the candidate whose number of votes differs in parity from the other two candidates. We claim $b_1 b_2 \dots b_n b$ is a ballot sequence of length $n+1$ with matching parity. The parity condition is clear, as is the ballot sequence condition if $b = 1$. On the other hand, suppose that $b \in \{2, 3\}$. Since $m_b \not\equiv m_{b-1} \pmod{2}$, we necessarily have $m_b < m_{b-1}$, so adding the $b$ at the end of the sequence preserves the ballot sequence property.
  \end{itemize}
  The total number of three-candidate ballot sequences of length $n$ is $M(n)$. We have expressed this set as a disjoint union of those with equal parity (size $r(n)$) and those without, and given a bijection between those without and a set of size $r(n+1)$. This shows that
  \begin{equation}
    M(n) = r(n) + r(n+1).
  \end{equation}
  Note that this relationship, together with $r(1) = 0$, determines the numbers $r(n)$ uniquely. It therefore follows from $R(1)=0$ and \eqref{eq: M=R+R} that $r(n)$ equals the Riordan numbers $R(n)$.
\end{proof}

\begin{remark}
  \label{remark:EqualityofTwoSumsOfSpechtmodules}
  As we observed earlier, there are several bijections in the literature between Motzkin paths and standard Young tableaux with at most three parts. Restricting these bijections to the Riordan paths gives a scattering of tableaux of all different shapes. Our result, however, says they are equinumerous with standard Young tableaux of at most three parts, all of the same parity.
\end{remark}

Combining the previous results and Corollary~\ref{cor: Riordan sum}, we obtain the following equality between sums of character degrees.
Here, we write $(\lambda_1, \lambda_2, \lambda_3)\vdash n$ for partitions of $n$ into at most three parts (thus allowing, for instance, $\lambda_3=0$).

\begin{theorem}
  \label{thm:equalitySpecht}
  Let $X = \{ (\lambda_1, \lambda_2, \lambda_3) \vdash n \mid \lambda_1 \equiv \lambda_2 \equiv \lambda_3 \pmod{2} \}$ and let $Y = \{ (k,k,1^{n-2k}) \mid 1 \leq k \leq \lfloor n/2 \rfloor \}$. Then:
  \begin{equation}
    \sum_{\lambda \in X} f^\lambda = \sum_{\mu \in Y} f^\mu.
    \label{eq: SumsSpechtequal}
  \end{equation}
\end{theorem}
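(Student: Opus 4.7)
The strategy is to show that both sides of~\eqref{eq: SumsSpechtequal} equal the Riordan number $R(n)$. For the right-hand side this is immediate: Corollary~\ref{cor: Riordan sum} already expresses $R(n)$ as $\sum_{k=1}^{\lfloor n/2 \rfloor} f^{(k,k,1^{n-2k})}$, which is precisely $\sum_{\mu \in Y} f^\mu$.

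For the left-hand side, I would invoke the classical bijection between standard Young tableaux of shape $(\lambda_1,\lambda_2,\lambda_3) \vdash n$ and three-candidate ballot sequences of length $n$ with content $(\lambda_1,\lambda_2,\lambda_3)$: given a standard tableau $T$, record the row of each entry $1,2,\dots,n$, using the letters $A$, $B$, $C$ for rows $1$, $2$, $3$, respectively. The requirement that $T$ be a valid Young tableau translates exactly into the three-candidate ballot condition on the output sequence, and under this correspondence the part-sizes $\lambda_1,\lambda_2,\lambda_3$ become the letter multiplicities. Hence the parity condition defining $X$ coincides with the matching-parity condition of Theorem~\ref{thm: Riordan=ballotequalparity}, and $\sum_{\lambda \in X} f^\lambda$ counts exactly the three-candidate ballot sequences of length $n$ with matching parity. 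By Theorem~\ref{thm: Riordan=ballotequalparity}, this number is $R(n)$.

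There is essentially no real obstacle beyond bookkeeping. The one thing to be careful about is that the sum over $X$ allows the degenerate cases $\lambda_3 = 0$ and $\lambda_2 = \lambda_3 = 0$, so that ballot sequences using only one or two of the three letters are also accounted for (treating $0$ as even for the parity condition); this is consistent with the conventions stated just before the theorem. With this caveat, the theorem is simply the conjunction of Corollary~\ref{cor: Riordan sum} and Theorem~\ref{thm: Riordan=ballotequalparity} via the row-recording bijection for three-row standard tableaux.
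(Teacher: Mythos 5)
Your proposal is correct and follows essentially the same route as the paper: both sides are identified with the Riordan number $R(n)$, the right-hand side via Corollary~\ref{cor: Riordan sum} and the left-hand side via the standard row-recording bijection between three-row standard Young tableaux and three-candidate ballot sequences combined with Theorem~\ref{thm: Riordan=ballotequalparity}. Your added remark about the degenerate cases $\lambda_3=0$ (or $\lambda_2=\lambda_3=0$) being included, with $0$ treated as even, is a sensible clarification consistent with the convention the paper states just before the theorem.
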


\begin{proof}
    Recall that (three-candidate) ballot sequences of length $n$ encode standard Young tableaux of size $n$ (that have at most three rows) in the following standard way: vote $i$ is cast for the $j$th candidate if $i$ appears in the $j$th row of the tableau. Under this standard bijection, the lengths $\lambda_1, \lambda_2, \lambda_3$ of the three rows correspond to the number of votes cast for the three candidates. In particular, the ballot sequence has matching parity if and only if $\lambda_1 \equiv \lambda_2 \equiv \lambda_3 \pmod{2}$. It follows that the left-hand side of \eqref{eq: SumsSpechtequal} counts the number of three-candidate ballot sequences of length $n$ with matching parity which, by Theorem~\ref{thm: Riordan=ballotequalparity}, equals the $n$-th Riordan number $R(n)$. By Corollary~\ref{cor: Riordan sum}, this matches the count for standard Young tableaux on the right-hand side of \eqref{eq: SumsSpechtequal}.
\end{proof}

\section{A proof of Conjecture~\ref{conj:N=1 version} for \texorpdfstring{$\lambda = (c^d)$}{lambda = (c,c,\ldots)}}
\label{sec: c>1 case}
In this section we prove Conjecture~\ref{conj:N=1 version} for $\lambda=(c^d)$ with $c>1$. Interestingly, when $c>1$ the values of the character sums do not depend on $c$. Rather than getting Riordan numbers, we encounter the central trinomial coefficients $T(n)=T(n,n)$, defined as the  largest coefficient of $(1+x+x^2)^n$ and represented by sequence A002426 in \cite{oeis}. We note that the more general trinomial coefficients $T(n,k)$, defined as the coefficient of $x^k$ in the expansion of $(1+x+x^2)^n$, are related to the Riordan numbers by the equation 
\begin{equation}
    \label{eq: relate trinomial and Riordan}
    R(n)= T(n,n)-T(n,n-1).
\end{equation}

As in the $c=1$ case, we compute the two sides of the conjecture separately.
So suppose that $\lambda = (c^d)$ is a partition of size $n = c d$ and denote the
right-hand side of Conjecture~\ref{conj:N=1 version} by
\begin{equation*}
  A_c (d) = \sum_{\tilde{\lambda} \in \operatorname{Ev} (\lambda)} \sum_{\mu \in
   R_2^c (2 n)} {\chi_{\tilde{\lambda}}^{\mu}}  = \sum_{\tilde{\lambda} \in
   \operatorname{Ev} (\lambda)} {\chi_{\tilde{\lambda}}^{(n, n)}}  .
\end{equation*}
We begin by showing that $A_c (d)$ is given by either the Riordan numbers $R
(d)$, if $c = 1$, or by the central trinomial coefficients $T (d)$, if $c >
1$.

\begin{theorem}
\label{thm:RHSc>1}
  We have
  \begin{equation*}
    A_c (d) = 2^d \cdot \left\{\begin{array}{ll}
       R (d), & \text{if $c = 1$,}\\
       T (d), & \text{if $c > 1$.}
     \end{array}\right.
  \end{equation*}
\end{theorem}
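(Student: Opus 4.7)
The plan is to derive a clean closed form for $\chi^{(n,n)}_{\tilde\lambda}$ valid for arbitrary $\tilde\lambda \vdash 2n$, specialize it to the multiset $\operatorname{Ev}((c^d))$, and collapse the resulting sum via the binomial theorem. This bypasses the symmetric function reformulation \eqref{eq: RHSN=1} and gives a single uniform computation in which the dichotomy between $c=1$ and $c>1$ emerges transparently from a divisibility condition.

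First I would use that the Young permutation character decomposes by Young's rule as $\chi_{M^{(a,b)}} = \sum_{k \ge 0} \chi^{(a+k,\,b-k)}$, so $\chi^{(n,n)} = \chi_{M^{(n,n)}} - \chi_{M^{(n+1,n-1)}}$. Since $\chi_{M^{(a,b)}}(\tilde\lambda)$ counts the $a$-subsets of $\{1,\dots,2n\}$ fixed by a permutation of cycle type $\tilde\lambda$, equivalently the number of ways to choose a subcollection of the cycles of $\tilde\lambda$ with lengths summing to $a$, it equals $[x^a] \prod_i (1 + x^{\tilde\lambda_i})$. Hence
\begin{equation*}
\chi^{(n,n)}_{\tilde\lambda} = \bigl([x^n] - [x^{n-1}]\bigr) \prod_i \bigl(1 + x^{\tilde\lambda_i}\bigr).
\end{equation*}
The same formula can alternatively be read off from the Jacobi--Trudi identity $s_{(n,n)} = h_n^2 - h_{n+1} h_{n-1}$.

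Next I would identify $\operatorname{Ev}((c^d))$ as the multiset of partitions $((2c)^k, c^{2(d-k)})$ for $0 \le k \le d$, each occurring with multiplicity $\binom{d}{k}$. Substituting and pulling the coefficient extraction past the finite sum over $k$, the binomial theorem collapses the inner sum:
\begin{equation*}
\sum_{k=0}^d \binom{d}{k} (1 + x^{2c})^k (1 + x^c)^{2(d-k)} = \bigl((1 + x^{2c}) + (1 + x^c)^2\bigr)^d = 2^d (1 + x^c + x^{2c})^d,
\end{equation*}
so that $A_c(d) = 2^d \bigl([x^{cd}] - [x^{cd-1}]\bigr)(1 + x^c + x^{2c})^d$.

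Finally I would extract the relevant coefficients from $2^d (1 + x^c + x^{2c})^d$. For $c = 1$ this yields $2^d ([x^d] - [x^{d-1}])(1 + x + x^2)^d = 2^d (T(d,d) - T(d,d-1)) = 2^d R(d)$ by \eqref{eq: relate trinomial and Riordan}. For $c > 1$ every exponent appearing in $(1 + x^c + x^{2c})^d$ is a multiple of $c$, so $[x^{cd-1}]$ vanishes and, with $y = x^c$, $[x^{cd}](1 + x^c + x^{2c})^d = [y^d](1 + y + y^2)^d = T(d)$, yielding $A_c(d) = 2^d T(d)$. The only real obstacle is the classical character formula in the first step, for which one must invoke Young's rule (or Jacobi--Trudi); after that, everything reduces to a single application of the binomial theorem, and the qualitative difference between $c = 1$ and $c > 1$ appears naturally according to whether $cd - 1$ is divisible by $c$.
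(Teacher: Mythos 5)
Your proposal is correct and follows essentially the same route as the paper: identifying $\operatorname{Ev}((c^d))$ as the partitions $((2c)^k, c^{2(d-k)})$ with multiplicity $\binom{d}{k}$, collapsing the sum via the binomial theorem to $2^d(1+x^c+x^{2c})^d$, and extracting coefficients with the $c=1$ versus $c>1$ dichotomy governed by divisibility of the exponents by $c$. The only difference is cosmetic: you derive the two-row character value $\chi^{(n,n)}_{\tilde\lambda} = \bigl([x^n]-[x^{n-1}]\bigr)\prod_i(1+x^{\tilde\lambda_i})$ from Young's rule (or Jacobi--Trudi), whereas the paper obtains the same quantity from the two-variable constant-term formula \eqref{eq:Sn:char:ct}, of which your expression is the $m=2$ dehomogenization.
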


\begin{proof}
  As employed in \cite{rrz-char}, it follows from the fact that
  $\chi_{\lambda}^{\mu} = \langle p_{\lambda}, s_{\mu} \rangle$ that the
  values of the character $\chi^{\mu}$ can be expressed as the constant term
  \begin{equation}
    \chi_{\lambda}^{\mu} = \operatorname{ct} \left[ \frac{\prod_{1 \leq i < j
    \leq m} \left(1 - \frac{x_j}{x_i} \right) \prod_{j = 1}^r \sum_{i =
    1}^m x_i^{\lambda_j}}{\prod_{i = 1}^m x_i^{\mu_i}} \right]
    \label{eq:Sn:char:ct}
  \end{equation}
  where $m = \ell (\mu)$ and $r = \ell (\lambda)$. In our present case we have $\mu=(n,n)$ so $m=2$. We
  therefore obtain
  \begin{equation*}
    A_c (d) = \sum_{\tilde{\lambda} \in \operatorname{Ev} (\lambda)}
     {\chi_{\tilde{\lambda}}^{(n, n)}}  = \operatorname{ct} \left[ \frac{\left(1 -
     \frac{x_2}{x_1} \right)}{(x_1 x_2)^n} \sum_{\tilde{\lambda} \in \operatorname{Ev}
     (\lambda)} \prod_{j = 1}^{\ell (\tilde{\lambda})}
     (x_1^{\tilde{\lambda}_j} + x_2^{\tilde{\lambda}_j}) \right] .
  \end{equation*}
  Observe that, for $\lambda = (c^d)$, the multiset $\operatorname{Ev} (\lambda)$
  consists of the partitions
  \begin{equation*}
    \left((2 c)^k {, c^{2 (d - k)}}  \right)
  \end{equation*}
  with multiplicity $\binom{d}{k}$ and with $k \in \{ 0, 1, \ldots, d \}$.
  Consequently,
  \begin{eqnarray*}
    A_c (d) & = & \operatorname{ct} \left[ \frac{\left(1 - \frac{x_2}{x_1}
    \right)}{(x_1 x_2)^n} \sum_{k = 0}^d \binom{d}{k} (x_1^{2 c} + x_2^{2
    c})^k (x_1^c + x_2^c)^{2 (d - k)} \right]\\
    & = & \operatorname{ct} \left[ \frac{\left(1 - \frac{x_2}{x_1} \right)}{(x_1
    x_2)^n} ((x_1^{2 c} + x_2^{2 c}) + (x_1^c + x_2^c)^2)^d \right]\\
    & = & 2^d \operatorname{ct} \left[ \frac{\left(1 - \frac{x_2}{x_1} \right)}{(x_1
    x_2)^n} (x_1^{2 c} + x_1^c x_2^c + x_2^{2 c})^d \right] .
  \end{eqnarray*}
  Since $n = c d$, the latter simplifies to
  \begin{eqnarray*}
    A_c (d) & = & 2^d \operatorname{ct} \left[ \left(1 - \frac{x_2}{x_1} \right)
    \left(\left(\frac{x_1}{x_2} \right)^c + 1 + \left(\frac{x_2}{x_1}
    \right)^c \right)^d \right]\\
    & = & 2^d \operatorname{ct} \left[ (1 - x) \left(\frac{1}{x^c} + 1 + x^c
    \right)^d \right] .
  \end{eqnarray*}
  If $c = 1$, then the claim follows from the well-known representation
  \begin{equation*}
    R (d) = \operatorname{ct} \left[ (1 - x) \left(\frac{1}{x} + 1 + x \right)^d
     \right]
  \end{equation*}
  of the Riordan numbers. On the other hand, suppose that $c > 1$. Then the
  expansion of $(x^{- c} + 1 + x^c)^d$ only features terms with exponents that
  are multiples of $c$. Therefore,
  \begin{equation*}
    \operatorname{ct} \left[ (1 - x) \left(\frac{1}{x^c} + 1 + x^c \right)^d \right]
     = \operatorname{ct} \left[ \left(\frac{1}{x^c} + 1 + x^c \right)^d \right] =
     \operatorname{ct} \left[ \left(\frac{1}{x} + 1 + x \right)^d \right]
  \end{equation*}
  is equal to the central trinomial coefficient $T (d)$.
\end{proof}

Likewise, for a partition $\lambda = (c^d)$ of size $n = c d$, denote the
left-hand side of Conjecture~\ref{conj:N=1 version} by
\begin{equation*}
  B_c (d) = \sum_{\tilde{\lambda} \in \operatorname{Ev} (\lambda)} \sum_{\mu \in R_3
   (2 n)} (- 1)^{\ell (\tilde{\lambda})} {\chi_{\tilde{\lambda}}^{\mu}}  .
\end{equation*}
In Theorem \ref{thm: Riordan=ballotequalparity} we already showed that $B_1 (d) = 2^d R (d)$. The following, combined
with the previous theorem, therefore shows that Conjecture~\ref{conj:N=1
version} is true for all partitions with a single part size.

\begin{theorem}
  If $c > 1$, then we have
  \begin{equation*}
    B_c (d) = 2^d T (d) .
  \end{equation*}
\end{theorem}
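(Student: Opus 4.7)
The plan is to mirror the constant-term approach of Theorem \ref{thm:RHSc>1}, but this time using $m = 3$ variables, since every $\mu \in \mathcal{R}_3(2n)$ has length at most three. First, I would combine Lemma \ref{lem:char-inner-product} with Theorem \ref{thm: SymFunctionIdentity} to obtain
\[
B_c(d) = 2^d \left\langle m_{(c,c)}^d, \sum_{\mu \in \mathcal{R}_3(2n)} s_\mu \right\rangle,
\]
and then apply the constant-term formula \eqref{eq:Sn:char:ct} with $m = 3$, interchanging the sum over $\mu$ with the constant term, to get
\[
B_c(d) = 2^d \operatorname{ct}_{x_1, x_2, x_3}\left[\prod_{i < j}\left(1 - \frac{x_j}{x_i}\right) (x_1^c x_2^c + x_1^c x_3^c + x_2^c x_3^c)^d \sum_{\mu \in \mathcal{R}_3(2n)} \prod_i x_i^{-\mu_i}\right].
\]
Since $n = cd$, the integrand is homogeneous of degree zero in the $x_i$, so I would set $x_1 = 1$ and rename $u = x_2$, $v = x_3$ to reduce to a constant term in the two variables $u, v$.

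The heart of the argument lies in evaluating this two-variable constant term. I would expand the Vandermonde factor as
\[
(1-u)(1-v)\left(1 - \frac{v}{u}\right) = 1 - u - \frac{v}{u} + uv + \frac{v^2}{u} - v^2,
\]
and use the multinomial expansion $(u^c + v^c + u^c v^c)^d = \sum_{a+b+e=d} \binom{d}{a,b,e} u^{c(a+e)} v^{c(b+e)}$. For each of the six Vandermonde monomials $u^{\alpha} v^{\beta}$, the constant-term condition forces $\mu_2 = c(a+e) + \alpha$ and $\mu_3 = c(b+e) + \beta$, and the relation $\mu_1 + \mu_2 + \mu_3 = 2n$ yields $\mu_1 = c(a+b) - (\alpha + \beta)$. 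The requirements $\mu_1 \ge \mu_2 \ge \mu_3 \ge 0$ together with $\mu_2, \mu_3$ being even translate into explicit parity conditions and inequalities on the triple $(a, b, e)$ with $a + b + e = d$.

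The main obstacle is the parity case analysis, since the admissible parities of $a + e$ and $b + e$ depend on whether $c$ is even or odd. When $c$ is even, $c(a+e)$ and $c(b+e)$ are automatically even, so four of the six Vandermonde monomials force $\mu_2$ or $\mu_3$ to be odd and therefore contribute zero. The two surviving contributions evaluate to
\[
\sum_{\substack{a \ge b \ge e \ge 0 \\ a+b+e = d}} \binom{d}{a,b,e} \;-\; \sum_{\substack{a > b > e \ge 0 \\ a+b+e = d}} \binom{d}{a,b,e},
\]
which counts weakly decreasing triples having at least two equal coordinates. By inclusion-exclusion this simplifies to $\sum_{a=0}^{\lfloor d/2 \rfloor} \binom{d}{a, a, d-2a} = T(d)$. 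When $c$ is odd (and $c \ge 3$), all six Vandermonde monomials potentially contribute, now with parity conditions relating $a$, $b$, $e$ to each other; the six contributions split naturally into three pairs (indexed by the common parity class of $(a+e, b+e)$), and a careful pairing argument — together with the same inclusion-exclusion identity — shows that the combined total is again $T(d)$. Combining both parity cases gives $B_c(d) = 2^d T(d)$ for all $c > 1$.
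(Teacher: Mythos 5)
Your proposal is correct, and while it launches from the same point as the paper's proof, it diverges at the decisive step. Both arguments use the symmetric-function identity of Theorem \ref{thm: SymFunctionIdentity} to replace the signed sum over $\operatorname{Ev}(\lambda)$ by $2^d(x_1^cx_2^c+x_2^cx_3^c+x_3^cx_1^c)^d$ inside the three-variable constant-term formula \eqref{eq:Sn:char:ct}. From there the paper keeps the sum over weak partitions $(\mu_1,\mu_2,\mu_3)$ intact, splits the Vandermonde product into the three pieces $\alpha,\beta,\gamma$, telescopes each resulting sum over $\mu$ down to a short geometric-type sum, and finally collapses everything to the one-variable constant term $\operatorname{ct}[(x+1/x+1)^d]=T(d)$. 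You instead dehomogenize immediately, expand both the Vandermonde factor (six monomials) and the trinomial power (multinomial coefficients $\binom{d}{a,b,e}$), solve the constant-term equations for $(\mu_1,\mu_2,\mu_3)$ in terms of $(a,b,e)$, and reduce to the combinatorial identity
\begin{equation*}
  \sum_{\substack{a\ge b\ge e\ge 0,\ a+b+e=d\\ a=b\ \text{or}\ b=e}}\binom{d}{a,b,e}
  \;=\;\sum_{a=0}^{\lfloor d/2\rfloor}\binom{d}{a,a,d-2a}\;=\;T(d).
\end{equation*}
This is arguably more elementary (no telescoping, no final substitution) at the cost of a longer case analysis; it also makes visible exactly where $c>1$ enters, namely in inequalities such as $c(b-e)\ge 2\Rightarrow b>e$ that kill the boundary terms. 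I verified your even-$c$ case in full, and your odd-$c$ case does close up as claimed: grouping the six monomials by the parity class of $(a+e,b+e)$, the (even, even) pair yields the triples with $a=b$ or $b=e$ and $a\equiv b\equiv e\pmod 2$, while the (odd, even) and (odd, odd) pairs each reduce to a single surviving family ($a>b=e$ with $a\not\equiv e$, respectively $a=b>e$ with $a\not\equiv e$), and the union reassembles exactly the unrestricted set $\{a=b\ \text{or}\ b=e\}$ needed above. That pairing is the one place your write-up is only a sketch, so it is the part that most needs to be written out carefully, but it is not a gap in the argument.
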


\begin{proof}
  We again begin by expressing the characters $\chi^{\mu}$ in terms of the
  constant terms \eqref{eq:Sn:char:ct}. In the present case, this leads to
  \begin{equation*}
    B_c (d) = \sum_{\tilde{\lambda} \in \operatorname{Ev} (\lambda)} \sum_{(\mu_1,
     \mu_2, \mu_3)} (- 1)^{\ell (\tilde{\lambda})} \operatorname{ct} \left[
     \frac{\prod_{1 \leq i < j \leq 3} \left(1 - \frac{x_j}{x_i}
     \right) \prod_{j = 1}^{\ell (\tilde{\lambda})} \sum_{i = 1}^3
     x_i^{\tilde{\lambda}_j}}{\prod_{i = 1}^3 x_i^{2 \mu_i}} \right]
  \end{equation*}
  where the inner sum is over (weak) partitions of $n$ into three parts: that
  is, $(\mu_1, \mu_2, \mu_3)$ with $\mu_1 \geq \mu_2 \geq \mu_3
  \geq 0$ and $\mu_1 + \mu_2 + \mu_3 = n$. This is possible since the
  formula \eqref{eq:Sn:char:ct} gives the same value if $\mu = (\mu_1, \ldots,
  \mu_m)$ is replaced by $\mu = (\mu_1, \ldots, \mu_m, 0)$ because the extra
  variable $x_{m + 1}$ only appears with nonnegative exponents and so cannot
  contribute to the constant term.
  
  As in the previous proof, we use that, for $\lambda = (c^d)$, the multiset
  $\operatorname{Ev} (\lambda)$ consists of the partitions $\left((2 c)^k {, c^{2 (d
  - k)}}  \right)$ with multiplicity $\binom{d}{k}$ and with $k \in \{ 0, 1,
  \ldots, d \}$. We therefore find, for any integer $m \geq 1$,
  \begin{eqnarray*}
    \sum_{\tilde{\lambda} \in \operatorname{Ev} (\lambda)} (- 1)^{\ell
    (\tilde{\lambda})} \prod_{j = 1}^{\ell (\tilde{\lambda})} \sum_{i = 1}^m
    x_i^{\tilde{\lambda}_j} & = & \sum_{k = 0}^d \binom{d}{k} (- 1)^k \left(\sum_{i = 1}^m x_i^{2 c} \right)^k \left(\sum_{i = 1}^m x_i^c \right)^{2
    (d - k)}\\
    & = & \left(\left(- \sum_{i = 1}^m x_i^{2 c} \right) + \left(\sum_{i =
    1}^m x_i^c \right)^2 \right)^d\\
    & = & 2^d m_{1, 1} (x_1^c, x_2^c, \ldots, x_m^c) .
  \end{eqnarray*}
  Applied to our situation, this implies that
  \begin{equation}
    2^{- d} B_c (d) = \operatorname{ct} \left[ (x_1^c x_2^c + x_2^c x_3^c + x_3^c
    x_1^c)^d \sum_{(\mu_1, \mu_2, \mu_3)} \frac{\left(1 - \frac{x_2}{x_1}
    \right) \left(1 - \frac{x_3}{x_1} \right) \left(1 - \frac{x_3}{x_2}
    \right)}{x_1^{2 \mu_1} x_2^{2 \mu_2} x_3^{2 \mu_3}} \right] .
    \label{eq:B:ct:sum:mu}
  \end{equation}
  The sum
  \begin{equation*}
    S = \sum_{(\mu_1, \mu_2, \mu_3)} \frac{\left(1 - \frac{x_2}{x_1} \right)
     \left(1 - \frac{x_3}{x_1} \right) \left(1 - \frac{x_3}{x_2}
     \right)}{x_1^{2 \mu_1} x_2^{2 \mu_2} x_3^{2 \mu_3}}
  \end{equation*}
  over partitions $(\mu_1, \mu_2, \mu_3)$ of $n$ into three parts is a
  Laurent polynomial in $x_1, x_2, x_3$. Of the monomials in that Laurent
  polynomial, only few contribute to the constant term and there is
  considerable cancellation among those that contribute. To describe this, we
  expand
  \begin{equation*}
    \left(1 - \frac{x_2}{x_1} \right) \left(1 - \frac{x_3}{x_1} \right)
     \left(1 - \frac{x_3}{x_2} \right) = \alpha + \beta + \gamma
  \end{equation*}
  where
  \begin{equation*}
    \alpha = 1 - \frac{x_3^2}{x_1^2}, \quad \beta = \frac{x_2 x_3}{x_1^2} -
     \frac{x_3}{x_2}, \quad \gamma = \frac{x_3^2}{x_1 x_2} - \frac{x_2}{x_1} .
  \end{equation*}
  \textbf{Case $\alpha$. }First, we consider the monomials in the sum $S$
  that arise from $\alpha$. These are
  \begin{equation*}
    \sum_{(\mu_1, \mu_2, \mu_3)} \frac{1 - x_1^{- 2} x_3^2}{x_1^{2 \mu_1}
     x_2^{2 \mu_2} x_3^{2 \mu_3}} = \sum_{\substack{
       (\mu_1, \mu_2, \mu_3)\\
       \mu_1 = \mu_2 \text{ or } \mu_2 = \mu_3
     }} \frac{1}{x_1^{2 \mu_1} x_2^{2 \mu_2} x_3^{2 \mu_3}} -
     \sum_{\substack{
       (\mu_1, \mu_2, \mu_3)\\
       \mu_3 = 0
     }} \frac{x_1^{- 2} x_3^2}{x_1^{2 \mu_1} x_2^{2 \mu_2} x_3^{2
     \mu_3}}
  \end{equation*}
  where most of the terms on the left-hand side cancelled in pairs. Observe
  that the monomials from the final sum do not contribute to the constant term
  \eqref{eq:B:ct:sum:mu} because $x_3$ appears with a positive exponent
  (namely as $x_3^2$). On the other hand, the first sum on the right-hand side
  splits into
  \begin{equation*}
    \sum_{\substack{
       (\mu_1, \mu_2, \mu_3)\\
       \mu_2 = \mu_3
     }} \frac{1}{x_1^{2 \mu_1} x_2^{2 \mu_2} x_3^{2 \mu_3}} =
     \sum_{\substack{
       m = 0
     }}^{\lfloor n / 3 \rfloor} \frac{1}{(x_3 x_2)^{2 m} x_1^{2 (n
     - 2 m)}}
  \end{equation*}
  as well as
  \begin{equation*}
    \sum_{\substack{
       (\mu_1, \mu_2, \mu_3)\\
       \mu_1 = \mu_2 > \mu_3
     }} \frac{1}{x_1^{2 \mu_1} x_2^{2 \mu_2} x_3^{2 \mu_3}} =
     \sum_{\substack{
       m = \lfloor n / 3 \rfloor + 1
     }}^{\lfloor n / 2 \rfloor} \frac{1}{(x_1 x_2)^{2 m} x_3^{2 (n
     - 2 m)}} .
  \end{equation*}
  Inside the constant term \eqref{eq:B:ct:sum:mu}, these are multiplied with
  the polynomial $(x_1^c x_2^c + x_2^c x_3^c + x_3^c x_1^c)^d$. Because the
  latter is symmetric, we can permute the variables $x_1, x_2, x_3$ in each
  monomial above. Overall, we therefore conclude that the contribution to the
  constant term \eqref{eq:B:ct:sum:mu} by monomials in $S$ arising from
  $\alpha$ is
  \begin{eqnarray}
    &  & \operatorname{ct} \left[ (x_1^c x_2^c + x_2^c x_3^c + x_3^c x_1^c)^d
    \sum_{\substack{
      m = 0
    }}^{\lfloor n / 2 \rfloor} \frac{1}{(x_1 x_2)^{2 m} x_3^{2 (n -
    2 m)}} \right] \nonumber\\
    & = & \operatorname{ct} \left[ (x_1^c + x_2^c + x_1^c x_2^c)^d
    \sum_{\substack{
      m = 0
    }}^{\lfloor n / 2 \rfloor} \frac{1}{(x_1 x_2)^{2 m}} \right] . 
    \label{eq:ct:contrib:a}
  \end{eqnarray}
  For the equality, we used that all terms are homogeneous, allowing us to set
  $x_3 = 1$ without changing the constant term.
  
  \textbf{Case $\beta$. }Next, we similarly consider the monomials in the
  sum $S$ that arise from $\beta$. These are
  \begin{equation*}
    \sum_{(\mu_1, \mu_2, \mu_3)} \frac{x_1^{- 2} x_2 x_3 - x_2^{- 1}
     x_3}{x_1^{2 \mu_1} x_2^{2 \mu_2} x_3^{2 \mu_3}} =
     \sum_{\substack{
       (\mu_1, \mu_2, \mu_3)\\
       \mu_2 = \mu_3
     }} \frac{x_1^{- 2} x_2 x_3}{x_1^{2 \mu_1} x_2^{2 \mu_2} x_3^{2
     \mu_3}} - \sum_{\substack{
       (\mu_1, \mu_2, \mu_3)\\
       \mu_1 \leq \mu_2 + 1
     }} \frac{x_2^{- 1} x_3}{x_1^{2 \mu_1} x_2^{2 \mu_2} x_3^{2
     \mu_3}}
  \end{equation*}
  where we again cancelled most of the terms on the left-hand side. Note that
  the monomials in the final sum are of the form $x_1^{- 2 \mu_1} x_2^{- 2
  \mu_2 - 1} x_3^{- 2 \mu_3 + 1}$. If $\mu_1 = \mu_2$ this is $x_1^{- 2 \mu_1}
  x_2^{- 2 \mu_1 - 1} x_3^{- 2 \mu_3 + 1}$ and if $\mu_1 = \mu_2 + 1$ this is
  $x_1^{- 2 \mu_1} x_2^{- 2 \mu_1 + 1} x_3^{- 2 \mu_3 + 1}$; in either case,
  the exponents of $x_1$ and $x_2$ differ by exactly $1$. As such, they cannot
  both be divisible by $c > 1$ and so the monomials cannot contribute to the
  constant term \eqref{eq:B:ct:sum:mu}. We rewrite the other sum as
  \begin{equation*}
    \sum_{\substack{
       (\mu_1, \mu_2, \mu_3)\\
       \mu_2 = \mu_3
     }} \frac{x_1^{- 2} x_2 x_3}{x_1^{2 \mu_1} x_2^{2 \mu_2} x_3^{2
     \mu_3}} = \sum_{\substack{
       m = 0
     }}^{\lfloor n / 3 \rfloor} \frac{1}{(x_3 x_2)^{2 m - 1} x_1^{2
     (n - 2 m + 1)}} .
  \end{equation*}
  Note that the term corresponding to $m = 0$ does not contribute to the
  constant term \eqref{eq:B:ct:sum:mu}. Similar to the case $\alpha$, we swap
  $x_1$ and $x_3$ in these monomials, then set $x_3 = 1$, to find that the
  contribution to the constant term \eqref{eq:B:ct:sum:mu} by monomials in $S$
  arising from $\beta$ is
  \begin{equation}
    \operatorname{ct} \left[ (x_1^c + x_2^c + x_1^c x_2^c)^d
    \sum_{\substack{
      m = 1
    }}^{\lfloor n / 3 \rfloor} \frac{1}{(x_1 x_2)^{2 m - 1}}
    \right] . \label{eq:ct:contrib:b}
  \end{equation}
  \textbf{Case $\gamma$. }Finally, the monomials in the sum $S$ that arise
  from $\gamma$ are
  \begin{equation*}
    \sum_{(\mu_1, \mu_2, \mu_3)} \frac{x_1^{- 1} x_2^{- 1} x_3^2 - x_1^{- 1}
     x_2}{x_1^{2 \mu_1} x_2^{2 \mu_2} x_3^{2 \mu_3}} =
     \sum_{\substack{
       (\mu_1, \mu_2, \mu_3)\\
       \mu_1 = \mu_2 \text{ or } \mu_3 = 0
     }} \frac{x_1^{- 1} x_2^{- 1} x_3^2}{x_1^{2 \mu_1} x_2^{2
     \mu_2} x_3^{2 \mu_3}} - \sum_{\substack{
       (\mu_1, \mu_2, \mu_3)\\
       \mu_2 \leq \mu_3 + 1
     }} \frac{x_1^{- 1} x_2}{x_1^{2 \mu_1} x_2^{2 \mu_2} x_3^{2
     \mu_3}} .
  \end{equation*}
  As in the case $\beta$, the final sum does not contribute to the constant
  term \eqref{eq:B:ct:sum:mu} when $c > 1$ because the exponents of $x_2$ and
  $x_3$ differ by exactly $1$. Further, as in the case $\alpha$, the monomials
  corresponding to $\mu_3 = 0$ in the first sum on the right-hand side do not
  contribute to the constant term \eqref{eq:B:ct:sum:mu} because $x_3$ appears
  with a positive exponent. We rewrite the remaining terms as
  \begin{equation*}
    \sum_{\substack{
       (\mu_1, \mu_2, \mu_3)\\
       \mu_1 = \mu_2
     }} \frac{x_1^{- 1} x_2^{- 1} x_3^2}{x_1^{2 \mu_1} x_2^{2
     \mu_2} x_3^{2 \mu_3}} = \sum_{\substack{
       m = \lceil n / 3 \rceil
     }}^{\lfloor n / 2 \rfloor} \frac{1}{(x_1 x_2)^{2 m + 1} x_3^{2
     (n - 2 m - 1)}} .
  \end{equation*}
  Note that the term corresponding to $m = \lfloor n / 2 \rfloor$ does not
  contribute to the constant term \eqref{eq:B:ct:sum:mu} if $\lfloor n / 2
  \rfloor > \lfloor (n - 1) / 2 \rfloor$ because the exponent of $x_3$ is
  positive in that case. Setting $x_3 = 1$, we thus record that the
  contribution to the constant term \eqref{eq:B:ct:sum:mu} by monomials in $S$
  arising from $\gamma$ is
  \begin{equation}
    \operatorname{ct} \left[ (x_1^c + x_2^c + x_1^c x_2^c)^d
    \sum_{\substack{
      m = \lceil n / 3 \rceil
    }}^{\lfloor (n - 1) / 2 \rfloor} \frac{1}{(x_1 x_2)^{2 m + 1}}
    \right] . \label{eq:ct:contrib:c}
  \end{equation}
  We now claim that the combined contribution to the constant term
  \eqref{eq:B:ct:sum:mu} by monomials in $S$ arising from $\beta$ and $\gamma$
  is
  \begin{equation}
    \operatorname{ct} \left[ (x_1^c + x_2^c + x_1^c x_2^c)^d
    \sum_{\substack{
      m = 0
    }}^{\lfloor (n - 1) / 2 \rfloor} \frac{1}{(x_1 x_2)^{2 m + 1}}
    \right] . \label{eq:ct:contrib:bc}
  \end{equation}
  This almost follows by simply summing the individual sums
  \eqref{eq:ct:contrib:b} and \eqref{eq:ct:contrib:c}, except that the
  combined sum \eqref{eq:ct:contrib:bc} has one additional term if $n \equiv
  1, 2$ modulo $3$; namely, the term corresponding to $m = \lfloor n / 3
  \rfloor$. We need to show that this term does not contribute to the constant
  term \eqref{eq:B:ct:sum:mu}. To see this, write $n = 3 r + \nu$ for $\nu \in
  \{ 1, 2 \}$ so that the extra term corresponds to $m = r$ and that the
  exponents of $x_1$ and $x_2$ are $- (2 r + 1)$. We readily confirm that
  $\gcd (n, 2 r + 1) = \gcd (3 r + \nu, 2 r + 1) = 1$ which implies that $2 r
  + 1$ cannot be a multiple of $c$. In particular, the extra term cannot
  contribute to the constant term \eqref{eq:B:ct:sum:mu} if $c > 1$.
  
  Finally, we conclude that \eqref{eq:B:ct:sum:mu} equals the sum of
  \eqref{eq:ct:contrib:a} and \eqref{eq:ct:contrib:bc}, resulting in
  \begin{equation*}
    2^{- d} B_c (d) = \operatorname{ct} \left[ (x_1^c + x_2^c + x_1^c x_2^c)^d
     \sum_{\substack{
       m = 0
     }}^n \frac{1}{(x_1 x_2)^m} \right] = \operatorname{ct} \left[ \left(x^c + \frac{1}{x^c y^c} + \frac{1}{y^c} \right)^d
     \sum_{\substack{
       m = 0
     }}^n y^m \right]
  \end{equation*}
  where we substituted $x = x_1$ and $y = 1 / (x_1 x_2)$, so that $x_2 = 1 /
  (x y)$, to obtain the latter constant term. Since the sum in that constant
  term is over all possible powers of $y$ that can contribute to the constant
  term, we obtain the overall constant term by setting $y = 1$. Hence,
  \begin{equation*}
    2^{- d} B_c (d) = \operatorname{ct} \left[ \left(x^c + \frac{1}{x^c} + 1
     \right)^d \right] = \operatorname{ct} \left[ \left(x + \frac{1}{x} + 1 \right)^d
     \right] = T (d),
  \end{equation*}
  as claimed.
\end{proof}

\begin{corollary}
  For integers $c > 1$ and $d \ge 1$ we have
  $$\left\langle m_{(c,c)}^d, \sum_{\mu \in \mathcal{R}_{3}(2cd)} s_\mu \right\rangle = T(d).$$
\end{corollary}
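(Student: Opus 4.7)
The plan is to obtain this corollary as an immediate consequence of the theorem just established, combined with the symmetric function reformulation developed earlier in the paper. First I would recall the computation in \eqref{eq: LHSN=1}: for any partition $\lambda = (\lambda_1, \ldots, \lambda_r) \vdash n$, the left-hand side of Conjecture~\ref{conj:N=1 version} equals
$$\left\langle 2^r \prod_{i=1}^r m_{\lambda_i \lambda_i}, \sum_{\mu \in \mathcal{R}_3(2n)} s_\mu \right\rangle.$$
Specializing to $\lambda = (c^d)$, so that $r = d$ and $n = cd$, this identity becomes
$$B_c(d) = 2^d \left\langle m_{(c,c)}^d, \sum_{\mu \in \mathcal{R}_3(2cd)} s_\mu \right\rangle.$$
The preceding theorem establishes that $B_c(d) = 2^d T(d)$ whenever $c > 1$, so cancelling the common factor $2^d$ yields the claimed formula.

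An equivalent route would be to pass through the right-hand side: by \eqref{eq: RHSN=1} we have $A_c(d) = 2^d \langle m_{(c,c)}^d, s_{(2^{cd})} \rangle$, while Theorem~\ref{thm:RHSc>1} shows $A_c(d) = 2^d T(d)$ for $c>1$, and the case $\lambda=(c^d)$ of Conjecture~\ref{conj:N=1 version} (now proven) forces the two relevant inner products to coincide. Either way, this corollary is pure bookkeeping on top of the two preceding theorems; there is no substantive obstacle, since all of the real work — the constant-term manipulation culminating in the central trinomial coefficient — has already been done.
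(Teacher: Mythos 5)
Your proposal is correct and matches the paper's intent: the corollary is stated without proof precisely because it follows, exactly as you say, by specializing \eqref{eq: LHSN=1} to $\lambda=(c^d)$ to get $B_c(d)=2^d\left\langle m_{(c,c)}^d,\sum_{\mu\in\mathcal{R}_3(2cd)}s_\mu\right\rangle$ and then invoking $B_c(d)=2^dT(d)$ from the preceding theorem. Your alternative route via $A_c(d)$ is also valid but is just a longer path to the same bookkeeping.
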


\section{Conclusions and future work}

We have proved Conjecture~\ref{conj:N=1 version} for all partitions with a single part size, thus affirmatively answering Question~\ref{q:amdeberhan:x} for those partitions and $N = 1$. It is a natural question to pursue whether the present techniques can be extended to prove identity \eqref{eq: N=1form} of Conjecture~\ref{conj:N=1 version} for all partitions. When summing \eqref{eq: N=1form} over all partitions, we get Amdeberhan's identity \eqref{eq: q:amdeberhan} specialized to $N=1$. That sum appears to admit the following simple closed formula:

\begin{conjecture}For any integer $n\ge1$,
  \begin{equation}
    \sum_{\lambda \vdash n} \frac{1}{z_\lambda} \sum_{\tilde{\lambda} \in \Ev} \sum_{\mu \in \mathcal{R}_3(2n)} (-1)^{\ell(\tilde{\lambda})} \chi^\mu_{\tilde{\lambda}} =
    \sum_{\lambda \vdash n} \frac{1}{z_\lambda} \sum_{\tilde{\lambda} \in \Ev} \chi^{(n,n)}_{\tilde{\lambda}} =
    \left\{\begin{array}{ll}
       \binom{\frac{n}{2}+2}{2}, & \text{if $n$ is even,}\\
       0, & \text{if $n$ is odd.}
     \end{array}\right.    
  \end{equation}
\end{conjecture}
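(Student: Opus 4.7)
The plan is to parallel the approach developed in the preceding sections, but to sum over all $\lambda \vdash n$ weighted by $z_\lambda^{-1}$ using the exponential formula. A direct calculation with Theorem~\ref{thm: SymFunctionIdentity} and the exponential formula yields
\[
  \sum_{\lambda} \frac{t^{|\lambda|}}{z_\lambda} \sum_{\tilde{\lambda} \in \Ev} p_{\tilde{\lambda}} \;=\; \prod_{i \le j}(1 - x_i x_j t)^{-2},
\]
and I set $G_n := [t^n] \prod_{i \le j}(1 - x_i x_j t)^{-2}$. Then Lemma~\ref{lem:char-inner-product} gives that the right-hand side of the conjectured identity equals $\langle G_n, s_{(n,n)} \rangle$. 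For the left-hand side, applying \eqref{eq:removealternatingsign} converts each $(-1)^{\ell(\tilde\lambda)}\chi^\mu_{\tilde\lambda}$ into $\chi^{\mu'}_{\tilde\lambda}$, turning the sum over $\mu \in \mathcal{R}_3(2n)$ into a sum over its set of conjugates $\widetilde{\mathcal{R}}_3(2n) := \{\mu' : \mu \in \mathcal{R}_3(2n)\}$; the left-hand side thus equals $\langle G_n, \sum_{\nu \in \widetilde{\mathcal{R}}_3(2n)} s_\nu \rangle$.

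Second, I would compute the right-hand side directly. By Littlewood's identity $\prod_{i\le j}(1-x_ix_j)^{-1}=\sum_{\mu \text{ even}}s_\mu$, the symmetric function $G_n$ is the degree-$2n$ part of $\bigl(\sum_{\mu \text{ even}} s_\mu\bigr)^2$, so
\[
  \langle G_n, s_{(n,n)}\rangle \;=\; \sum_{\lambda,\mu \text{ even},\;|\lambda|+|\mu|=2n} c_{\lambda\mu}^{(n,n)}.
\]
Applying the Littlewood--Richardson rule to the two-row shape $(n,n)$ yields $c_{\lambda\mu}^{(n,n)} = 1$ precisely when $\mu = (n-\lambda_2, n-\lambda_1)$, and $0$ otherwise. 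Demanding both $\lambda$ and $\mu$ to have even parts forces $n$ to be even, and enumerating pairs $(\lambda_1,\lambda_2)$ with $0 \le \lambda_2 \le \lambda_1 \le n$ and both even gives exactly $\binom{n/2+2}{2}$ pairs; for odd $n$ there are none.

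For the left-hand side, I would follow the constant-term analysis in the proof for $B_c(d)$ in Section~\ref{sec: c>1 case}, but now perform the $\lambda$-sum via the exponential formula. The polynomial $(x_1^c x_2^c + x_2^c x_3^c + x_1^c x_3^c)^d$ is replaced by
\[
  K(\vec{x}) \;:=\; [t^n]\prod_{1 \le i < j \le 3}(1 - x_i x_j t)^{-2},
\]
which remains symmetric and homogeneous of degree $2n$. The $\alpha,\beta,\gamma$ decomposition proceeds as before, and the ``good'' contributions combine into $\operatorname{ct}\bigl[K(\vec x)|_{x_3=1} \cdot \sum_{m=0}^{n}(x_1 x_2)^{-m}\bigr]$, which a direct expansion of $K|_{x_3=1}$ evaluates to $\sum_{b=0}^{\lfloor n/2 \rfloor}(b+1)^2(n-2b+1)$, matching the ``positive'' part of the evaluation of the right-hand side. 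The essential new difficulty is that the divisibility argument used in the $c > 1$ proof is no longer available: the ``bad'' contributions from cases $\beta$ and $\gamma$ (monomials whose $x_1$ and $x_2$ exponents differ by $1$) persist, as does the ``extra'' discrepancy term at $m = \lfloor n/3 \rfloor$ when $n \not\equiv 0 \pmod{3}$.

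The main obstacle is showing that these remaining contributions combine to yield the negative counterpart $\sum_{c=0}^{\lfloor (n-1)/2 \rfloor}(c+2)(c+1)(n-2c)$ predicted by the right-hand side. Each such contribution is a finite sum over weak partitions of $n$ into three parts with prescribed gap conditions between consecutive parts, and the associated polynomial identities in $n$ appear to depend on the residue of $n$ modulo $6$. A cleaner alternative, if one can be found, is to prove the equivalent Schur-coefficient identity $\sum_{\nu \in \widetilde{\mathcal{R}}_3(2n)} [s_\nu] G_n = [s_{(n,n)}] G_n$ via a Sch\"utzenberger-type involution on pairs of Littlewood--Richardson tableaux associated to pairs of even partitions, which would sidestep the explicit residue-class case analysis.
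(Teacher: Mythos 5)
The paper offers no proof of this statement: it is presented in the concluding section as an open conjecture (``That sum appears to admit the following simple closed formula''), so there is no argument of the authors' to compare yours against. Judged on its own terms, your proposal establishes the \emph{second} equality completely and correctly, which is genuinely more than the paper does. The generating-function identity $\sum_{\lambda} \frac{t^{|\lambda|}}{z_\lambda} \sum_{\tilde{\lambda} \in \Ev} p_{\tilde{\lambda}} = \prod_{i \le j}(1 - x_i x_j t)^{-2}$ is right (it follows from $\sum_{\tilde{\lambda} \in \Ev} p_{\tilde{\lambda}} = \prod_i (p_{2\lambda_i} + p_{\lambda_i}^2)$ and the exponential formula, rather than from Theorem~\ref{thm: SymFunctionIdentity}, which is the signed version --- a cosmetic misattribution), Littlewood's identity correctly identifies $G_n$ with the degree-$2n$ part of $\bigl(\sum_{\mu \text{ even}} s_\mu\bigr)^2$, and the rectangle Littlewood--Richardson fact $c_{\lambda\mu}^{(n,n)} = [\mu = (n-\lambda_2, n-\lambda_1)]$ yields exactly $\binom{n/2+2}{2}$ for even $n$ and $0$ for odd $n$. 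I spot-checked this against $n=2$ and $n=4$ using the paper's tables and it is consistent.

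The genuine gap is the first equality, which is the actual content of the conjecture (it is Amdeberhan's identity \eqref{eq: q:amdeberhan} at $N=1$). Your constant-term setup is sound, and your evaluation of the ``good'' part as $\sum_{b=0}^{\lfloor n/2\rfloor}(b+1)^2(n-2b+1)$ checks out, but the entire difficulty is concentrated in the step you flag yourself: when the single part size $c>1$ is replaced by the full sum over $\lambda$, the divisibility argument that killed the $\beta$- and $\gamma$-case monomials with $x_1,x_2$ exponents differing by $1$ (and the extra term at $m=\lfloor n/3\rfloor$) is no longer available, and these terms genuinely contribute --- for $n=2$ they must supply $-4$ to bring $7$ down to $3$. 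Asserting that they ``combine to yield the negative counterpart $\sum_{c=0}^{\lfloor (n-1)/2 \rfloor}(c+2)(c+1)(n-2c)$'' is precisely the claim that needs proof, and neither the residue-class computation nor the proposed involution alternative is carried out. As written, the proposal is a correct proof of the closed-form evaluation of the right-hand side together with a plausible but incomplete strategy for the left-hand side; the conjecture remains open.
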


In private communication, Amdeberhan has shared that his Question~\ref{q:amdeberhan} is inspired by the following conjectured equality of $q$-series. Here, given a partition $\lambda = (\lambda_1, \lambda_2, \ldots, \lambda_r)$, we denote
\[ g_{\lambda} (q) = \prod_{j = 1}^r \frac{q^{\lambda_j}}{1 + q^{\lambda_j}} .
\]
\begin{conjecture}[Amdeberhan]
  \label{conj:amdeberhan:q}For all integers $N \geq 1$,
  \begin{equation}
    \sum_{n \geq 0} \sum_{\lambda \vdash n} \frac{g_{\lambda}
    (q)}{z_{\lambda}} \sum_{\tilde{\lambda} \in \Ev} \sum_{\mu
    \in R_{2 N + 1} (2 n)} (- 1)^{\ell (\tilde{\lambda})}
    {\chi_{\tilde{\lambda}}^{\mu}}  = \sum_{n \geq 0} \sum_{\lambda
    \vdash n} \frac{g_{\lambda} (q)}{z_{\lambda}} \sum_{\tilde{\lambda} \in
    \Ev} \sum_{\mu \in R_{2 N}^c (2 n)}
    {\chi_{\tilde{\lambda}}^{\mu}}  . \label{eq:amdeberhan:q}
  \end{equation}
\end{conjecture}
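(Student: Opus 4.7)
The plan is to apply Theorem~\ref{thm: SymFunctionIdentity} to both sides of~\eqref{eq:amdeberhan:q} simultaneously, turning the conjecture into an inner-product identity in the ring of symmetric functions with $q$-coefficients, and then to analyze the resulting generating series via the exponential formula. On the left-hand side, Lemma~\ref{lem:char-inner-product} gives $\chi^{\mu}_{\tilde\lambda} = \langle p_{\tilde\lambda}, s_\mu\rangle$, and Theorem~\ref{thm: SymFunctionIdentity} collapses the signed inner sum over $\tilde\lambda\in\Ev$. On the right-hand side, \eqref{eq:removealternatingsign} (which applies because $|\tilde\lambda|=2n$ is even) rewrites the unsigned sum over $\mu\in\mathcal{R}_{2N}^c(2n)$ as a signed sum $\sum_\mu (-1)^{\ell(\tilde\lambda)}\chi^{\mu'}_{\tilde\lambda}$, and relabelling $\nu=\mu'$ turns $\mathcal{R}_{2N}^c(2n)$ into the set of partitions of $2n$ with all parts even and largest part at most $2N$. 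Both sides now produce the same outer factor after Theorem~\ref{thm: SymFunctionIdentity}, so Conjecture~\ref{conj:amdeberhan:q} reduces to $\langle F(q),\,G_L\rangle = \langle F(q),\,G_R\rangle$, where
\[
F(q)\;:=\;\sum_{n\ge 0}\sum_{\lambda\vdash n}\frac{g_\lambda(q)}{z_\lambda}\cdot 2^{\ell(\lambda)}\prod_{i=1}^{\ell(\lambda)}m_{\lambda_i,\lambda_i},
\]
\[
G_L \;:=\; \sum_{n\ge 0}\sum_{\substack{\mu\vdash 2n\\ \mu_i\text{ even},\ \ell(\mu)\le 2N+1}} s_\mu, \qquad G_R \;:=\; \sum_{n\ge 0}\sum_{\substack{\nu\vdash 2n\\ \nu_i\text{ even},\ \nu_1\le 2N}} s_\nu,
\]
and the Hall inner product is computed degree-by-degree in each fixed $2n$.

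Next I would give $F(q)$ an explicit closed form. The standard exponential (cycle-index) formula, applied separately for each part size, yields
\[
F(q)\;=\;\exp\!\left(\sum_{k\ge 1}\frac{2q^k\,m_{k,k}}{k\,(1+q^k)}\right),
\]
and expanding $\tfrac{q^k}{1+q^k}=\sum_{m\ge 1}(-1)^{m-1}q^{mk}$ together with $m_{k,k}(x)=\sum_{i<j}x_i^k x_j^k$, then re-summing the inner series over $k$ via $-\log(1-u)=\sum_k u^k/k$, gives the explicit infinite product
\[
F(q) \;=\; \prod_{i<j}\prod_{m\ge 1} \bigl(1-q^m x_i x_j\bigr)^{2(-1)^m} \;=\; \prod_{i<j}\!\left(\frac{(q^2 x_i x_j;\,q^2)_\infty}{(q x_i x_j;\,q^2)_\infty}\right)^{\!2}.
\]
With $F(q)$ in this form, the conjecture becomes an identity among specific Schur coefficients of an explicit Cauchy-type product.

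The Schur sums $G_L$ and $G_R$ are natural truncations of the classical Littlewood identities $\sum_{\mu_i\,\text{even}}s_\mu(x)=\prod_{i\le j}(1-x_i x_j)^{-1}$ and $\sum_{\mu_i'\,\text{even}}s_\mu(x)=\prod_{i<j}(1-x_i x_j)^{-1}$. Working in $2N+1$ variables makes every Schur with more than $2N+1$ rows vanish, so the specialization of $G_L$ equals the full Littlewood product in $2N+1$ variables. Dually, applying $\omega$ to $G_R$ (which sends $s_\nu$ to $s_{\nu'}$) returns the set $\mathcal{R}_{2N}^c$; specialization to $2N$ variables then collapses $\omega(G_R)$ to $\prod_{i<j\le 2N}(1-x_ix_j)^{-1}$. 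The strategy is to substitute these specializations into $\langle F(q),G_L\rangle$ and $\langle F(q),G_R\rangle$ (recovering the Hall pairing by coefficient extraction on the specializations, as in standard Cauchy-kernel arguments), converting Conjecture~\ref{conj:amdeberhan:q} into an explicit identity between two $q$-series built from the $(q;q^2)_\infty$-type factors in $F(q)$. An alternative path uses the constant-term formula~\eqref{eq:Sn:char:ct} directly on both sides, along the lines of the arguments in Section~\ref{sec: c>1 case}, combined with the identification of $\sum_\mu \prod_i x_i^{-\mu_i}$ as a rational function of the $x_i$.

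The main obstacle I anticipate is the mismatch between the two specializations: the ``excess'' partitions in $G_L\setminus G_R$ (at most $2N+1$ rows but first part exceeding $2N$) and in $G_R\setminus G_L$ (first part at most $2N$ but more than $2N+1$ rows) are \emph{not} paired by conjugation, so a naive ``same number of variables'' reduction only matches the overlap. What remains after cancelling the common shapes is an intrinsic $q$-series identity forced by the explicit product form of $F(q)$, and this is where essentially all of the non-trivial content of Conjecture~\ref{conj:amdeberhan:q} resides; it is also consistent with the failure of the partition-level identity~\eqref{eq: stronger form} at larger $n$, which means the conjecture cannot follow term-by-term and must exploit the averaging through $g_\lambda(q)/z_\lambda$. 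I expect that the decisive step will be of Rogers--Ramanujan or Andrews--Gordon flavor, obtained either by a Bailey-pair argument tailored to the $(q;q^2)_\infty^{-2}$-flavored factors of $F(q)$, or by an induction on $N$ that transfers the boundary discrepancy from one level to the next.
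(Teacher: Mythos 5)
The statement you are attempting is not proved in the paper at all: it appears as Conjecture~\ref{conj:amdeberhan:q}, attributed to Amdeberhan via private communication, and the authors only report a numerical verification of \eqref{eq:amdeberhan:q} for $N=1$ up to $O(q^{11})$ by truncating the outer sums. So there is no paper proof to compare against, and your proposal does not close that gap: it is a reduction followed by an explicitly acknowledged missing step. Your own text concedes that after the reformulation ``essentially all of the non-trivial content of Conjecture~\ref{conj:amdeberhan:q} resides'' in an unproven $q$-series identity, for which you only speculate about a ``Rogers--Ramanujan or Andrews--Gordon flavor'' argument or an unspecified induction on $N$. That is a research plan, not a proof.

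On the parts you do carry out: the reformulation via Lemma~\ref{lem:char-inner-product}, Theorem~\ref{thm: SymFunctionIdentity}, and \eqref{eq:removealternatingsign} is sound and mirrors the derivation of \eqref{eq: ConjN=1reducedform}, and your closed form $F(q)=\prod_{i<j}\bigl((q^2x_ix_j;q^2)_\infty/(qx_ix_j;q^2)_\infty\bigr)^2$ checks out by the exponential formula. However, the subsequent specialization step is not just incomplete but shaky as stated: the Hall pairing $\langle F,G_R\rangle$ cannot be computed by applying $\omega$ to $G_R$ alone (you must apply $\omega$ to $F$ as well, and $\omega(m_{(k,k)})$ is a forgotten symmetric function with no comparably clean product form, since $m_{(k,k)}\neq s_{(k,k)}$ for $k>1$), and evaluating $G_L$ or $G_R$ in finitely many variables does not by itself recover the Hall pairing against an $F$ living in infinitely many variables without a genuine Cauchy-kernel argument that you have not supplied. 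Finally, note that your correctly identified obstacle --- that the identity fails at the level of individual partitions $\lambda$ (Appendix~\ref{sec:appendix-counterexamples}) and so must exploit the averaging by $g_\lambda(q)/z_\lambda$ --- is precisely the reason the statement remains open; until that averaging mechanism is made to work, nothing has been proved.
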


Note that $g_{\lambda} (q) = O (q^{| \lambda |})$, allowing us to verify
equation~\eqref{eq:amdeberhan:q} for fixed $N$ up to terms of order $q^m$ by
truncating the outer sums to $n \leqslant m$. Doing so, we find, for instance,
that for $N = 1$ both sides of \eqref{eq:amdeberhan:q} equal
\[ 1 + 3 q^2 - 4 q^3 + 9 q^4 - 12 q^5 + 22 q^6 - 36 q^7 + 60 q^8 - 88 q^9 +
   135 q^{10} + O (q^{11}) . \]

Various bijections are known between Motzkin paths and standard Young tableaux with at most three parts. It would be of interest to identify such a bijection with the additional property that the subset of Riordan paths is mapped to standard Young tableaux of at most three parts, all of the same parity. Theorem~\ref{thm: Riordan=ballotequalparity} shows that this is possible but does not provide an explicit bijection.

\medskip
\textbf{Acknowledgements. }We thank Tewodros Amdeberhan for kindly sharing
details on his Question~\ref{q:amdeberhan} as well as allowing us to include
his motivating Conjecture~\ref{conj:amdeberhan:q}.

\appendix

\section{Counterexamples to Question~\ref{q:amdeberhan:x}}
\label{sec:appendix-counterexamples}
We showed that the answer to Question~\ref{q:amdeberhan:x} is affirmative for $N=1$ and partitions with one part size, and we conjecture that the answer continues to be affirmative for $N=1$ in general.  Here, we illustrate that identity \eqref{eq: stronger form} does not, however, hold in general.

While identity \eqref{eq: stronger form} holds for all partitions of size $n\le7$, we find that it holds for partitions of size $n=8$ only if $N\neq3$. In the case $N=3$, we find that \eqref{eq: stronger form} holds for the partitions $\{(8), (7, 1), (6, 2), (6, 1, 1), (4, 2, 1, 1), (2, 2, 2, 1, 1)\}$ but not for other partitions of size $8$.

For instance, consider $\lambda=(5,2,1) \vdash 8$. Then:

\begin{equation}
\begin{aligned}
\Ev = \{&(10,4,2), (10,4,1,1), (10,2,2,2), (10,2,2,1,1), \\
&(5,5,4,2), (5,5,4,1,1), (5,5,2,2,2), (5,5,2,2,1,1)\}.
\end{aligned}
\label{eq:evforctex}
\end{equation}
One can check that the identity \eqref{eq: stronger form} in Question~\ref{q:amdeberhan:x} holds for $N=1$ and $N=2$ in this case. For $N=3$ the new partitions $\mu$ in the left-hand sum are those with exactly six or seven parts, all even, namely $(6,2^5), (4^2,2^4),(4,2^6)$.
The new partitions on the right-hand side are those with even multiplicities and exactly six parts, namely $(3^4,2^2), (4^2,2^4), (4^2,3^2,1^2),(5^2,2^2,1^2),(6^2,1^4)$.

Calculating the character sums we find that the contributions of the new partitions to the sum on the left-hand side of \eqref{eq: stronger form} is zero while they are $-8$ on the right-hand side. As a result the two sides of \eqref{eq: stronger form} differ by $8$ for $N=3$. For $N=4$, the additional new partitions contribute $-8$ on the left-hand side and $0$ on the right-hand side so that \eqref{eq: stronger form} again holds for $N=4$. Indeed, we find that \eqref{eq: stronger form} holds for $N\ge4$, as predicted in Remark \ref{remark: RN} for $N\ge8$.

For larger $n$, the discrepancies between the two sides of \eqref{eq: stronger form} can get more pronounced, although for $N=1$ and $N=2$ we have not observed any partitions $\lambda$ for which \eqref{eq: stronger form} does not hold. For example, when $\lambda=(3^2,2^3,1)$ we find that the right-hand side of \eqref{eq: stronger form} exceeds the left-hand side by $5184$ for $N=3$, by $7488$ for $N=4$, and by $2368$ for $N=5$.  For other values of $N$, the identity \eqref{eq: stronger form} does hold for that partition $\lambda$.

\bibliographystyle{alpha}
\bibliography{SymmetricFunctionsReferences}

\end{document}